\newtheorem{thm}{Theorem}[section]
\newtheorem{cor}[thm]{Corollary}
\newtheorem{lem}[thm]{Lemma}
\newtheorem{prop}[thm]{Proposition}
\theoremstyle{definition}
\newtheorem{exmp}[thm]{Example}
\theoremstyle{definition}
\newtheorem{rmk}[thm]{Remark}
\newcommand*\diff{\mathop{}\!\mathrm{d}}
\newcommand{\Rmnum}[1]{\expandafter\@slowromancap\romannumeral #1@}
\DeclareMathOperator{\ord}{ord}
\begin{document}
\title[E-O strata on the moduli of  curves]{Ekedahl-Oort strata on the moduli space\\ of  curves of  genus four}
\author{Zijian Zhou}
\maketitle
\thispagestyle{empty}
\begin{abstract}
We study the induced Ekedahl-Oort stratification on the moduli of curves of genus $4$ in positive characteristic.
\end{abstract}
\section{Introduction}
Let $k$ be an algebraically closed field with ${\rm char}(k)=p>0$. 
Let $\mathcal{A}_g\otimes k$ be the moduli space (stack) of principally polarized
abelian varieties of dimension $g$ defined over $k$ and let $\mathcal{M}_g \otimes k$ be the
moduli space of (smooth projective) curves of genus~$g$ defined over $k$.
Ekedahl and Oort introduced a stratification on $\mathcal{A}_g \otimes
k$ consisting of $2^g$ strata, cf. \cite{Oort1999,Ekedahl2009}. These strata are indexed by $n$-tuples
$\mu=[\mu_1, \ldots,\mu_n]$ with $0\leq n \leq g$ and $\mu_1 > \mu_2
>\cdots >\mu_n>0$. The largest stratum is the locus
of ordinary abelian varieties corresponding to the empty $n$-tuple $\mu=\emptyset$.  Their cycle classes have been studied by \cite{vanderGeer1999}.

Via the Torelli map $\tau:
\mathcal{M}_g\otimes k \to \mathcal{A}_g\otimes k$ we can pull back this stratification
to $\mathcal{M}_g \otimes k$ and it is natural to ask what
stratification this provides. Similarly, we can ask for the
induced stratification on the hyperelliptic locus
$\mathcal{H}_g \otimes k$. 
We denote the induced strata on $\mathcal{M}_g\otimes k$ by $Z_{\mu}$. We say a (smooth) curve has Ekedahl-Oort type $\mu$ if the corresponding point in $\mathcal{M}_g\otimes k$ lies in $Z_{\mu}$. 

Here we are interested in the existence of curves of  genus $4$ with given Ekedahl-Oort type. By a curve we mean a smooth irreducible projective curve defined over~$k$.  For $g\leq 3$, we know the situation for the induced Ekedahl-Oort stratification on $\mathcal{M}_g\otimes k$. But for $g\geq 4$ much less is known. Elkin and Pries  \cite{MR3095219} give a 
complete classification for hyperelliptic curves when $p = 2$. Our first result
describes this stratification on $\mathcal{H}_4 \otimes k$ with $p=3$.
In the following we write simply $\mathcal{A}_g$ (resp.\
$\mathcal{M}_g$, $\mathcal{H}_g$)  for $\mathcal{A}_g \otimes k$
(resp.\ $\mathcal{M}_g \otimes k$, $\mathcal{H}_g\otimes k$).
Recall that the indices $\mu$ of the Ekedahl-Oort strata are partially ordered by  $$\mu=[\mu_1,\dots,\mu_n] \preceq \upsilon=[\upsilon_1,\dots,\upsilon_m]$$
if $n\leq m$ and $\mu_i \leq\upsilon_i $ for $i=1,\dots,n$.

\begin{thm}\label{hyper theorem}
Let $k$ be an algebraically closed field with ${{\rm char}(k)=3}$. A smooth hyperelliptic curve of genus $4$ over $k$ has $a$-number $\leq 2$. In particular, $Z_{\mu} \cap \mathcal{H}_4$ is empty if
$\mu \succeq [3,2,1]$. If $\mu \nsucceq  [3,2,1]$, the codimension of $Z_{\mu} \cap \mathcal{H}_4$
in $\mathcal{H}_4$ equals the expected codimension  $\sum_{i=1}^n \mu_i$. Moreover,
in the cases $\mu=[4,1],  [3,1], [3,2],[2,1]$ and $[1]$ the intersection
$Z_{\mu} \cap \mathcal{H}_4$ is irreducible.
\end{thm}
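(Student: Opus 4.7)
The plan is to make everything explicit in terms of the Cartier-Manin matrix of a Weierstrass equation $y^2=f(x)$ with $f\in k[x]$ separable of degree $10$ (the degree-$9$ case reduces to this after a projective change of variable). In characteristic three, since $(p-1)/2=1$, the Cartier operator on $H^0(C,\Omega^1)$ in the basis $\omega_i=x^{i-1}\diff x/y$ ($i=1,\dots,4$) is, up to a Frobenius twist, represented by $M=(c_{3i-j})_{1\le i,j\le 4}$, where $f=\sum c_kx^k$ and $c_k=0$ outside $0\le k\le 10$; concretely,
\[
M=\begin{pmatrix}c_{2}&c_{1}&c_{0}&0\\ c_{5}&c_{4}&c_{3}&c_{2}\\ c_{8}&c_{7}&c_{6}&c_{5}\\0&c_{10}&c_{9}&c_{8}\end{pmatrix},
\]
so that $a(C)=4-\mathrm{rank}(M)$. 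The forced zeros of $M$ in positions $(1,4)$ and $(4,1)$ are the structural feature that will drive every subsequent argument.

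To prove $a(C)\le 2$, I assume $\mathrm{rank}(M)\le 1$ and write $M=uv^{\top}$ with $u,v\neq 0$. The vanishing of $M_{14}=u_1v_4$ and $M_{41}=u_4v_1$ splits into four sub-cases according to which of rows $1,4$ or columns $1,4$ of $M$ vanish. If two rows (or two columns) vanish, $\deg f$ drops below $9$ and the genus below $4$; if row $1$ and column $1$ both vanish then $c_0=c_1=c_2=c_5=c_8=0$, so $x^3\mid f$, contradicting separability. In the remaining case, columns $1$ and $4$ both zero, one has $c_2=c_5=c_8=0$ and the two middle columns of $M$ proportional, which yields $f(x)=(x+\lambda)(c_1+c_4x^3+c_7x^6+c_{10}x^9)=(x+\lambda)h(x)^3$ in characteristic $3$ for some $h\in k[x]$, again contradicting separability. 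Hence $\mathrm{rank}(M)\ge 2$, proving $a(C)\le 2$, and since every $\mu\succeq[3,2,1]$ has $n\ge 3$, the asserted emptiness of $Z_\mu\cap\mathcal{H}_4$ follows.

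For the codimension claim I would proceed stratum by stratum. Each EO type $\mu$ with $n\le 2$ is cut out inside $\mathcal{A}_4$ by explicit rank conditions on $M$ and on words in $M$ and its Frobenius conjugate that encode the canonical Ekedahl-Oort filtration; these pull back to polynomial equations in $(c_0,\dots,c_{10})$. Since $\dim\mathcal{H}_4=7$ and $\mathrm{codim}_{\mathcal{A}_4}V_\mu=|\mu|$, every component of $Z_\mu\cap\mathcal{H}_4$ has dimension at least $7-|\mu|$ as soon as it is nonempty, so the content is to exhibit a smooth curve of type $\mu$ and to bound the dimension above by $7-|\mu|$. For each allowed $\mu$ one writes down an explicit family of $f$'s over a $(7-|\mu|)$-dimensional base that realises type $\mu$ generically: the ordinary locus $Z_{\emptyset}\cap\mathcal{H}_4$ is open dense of dimension $7$, the hypersurface $\det M=0$ gives $\overline{Z_{[1]}}\cap\mathcal{H}_4$ of codimension one, and successive minor conditions on $M$ and its Frobenius twists cut out the deeper strata, with the deeper specialisations verified to have strictly smaller dimension.

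The main obstacle is the irreducibility of $Z_\mu\cap\mathcal{H}_4$ for $\mu\in\{[1],[2,1],[3,2],[3,1],[4,1]\}$. The strategy is, for each such $\mu$, to produce a unirational (hence irreducible) parametrization of a dense open part of the stratum. In each case one fixes a normal form for $f$ after the $\mathrm{PGL}_2$-action (for example pinning three Weierstrass points to $0,1,\infty$), solves the polynomial equations coming from the rank conditions on $M$ to express some of the coefficients $c_k$ as rational functions of $7-|\mu|$ free parameters, and verifies on a Zariski-dense open subset that the resulting curve is smooth and has Ekedahl-Oort type exactly $\mu$ rather than a deeper specialisation. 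For $\mu=[1]$ and $[2,1]$ the defining equations are mild and the parametrization is essentially immediate; for $[3,2],[3,1]$ and especially $[4,1]$ the polynomials have higher degree, and the genuine work lies in describing the solution variety, ruling out spurious components, and verifying that the prescribed type is attained generically. This is the step I expect to be the main technical bottleneck of the proof.
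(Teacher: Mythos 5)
Your treatment of the $a$-number bound is correct: the rank-one factorization of $M$ together with the forced zeros in positions $(1,4)$ and $(4,1)$ does rule out $\operatorname{rank}(M)\le 1$ for separable $f$, and the emptiness of $Z_\mu\cap\mathcal{H}_4$ for $\mu\succeq[3,2,1]$ follows since such $\mu$ forces $a\ge 3$. (The paper gets this more cheaply: after normalizing $f=x^9+a_8x^8+\cdots+a_2x^2+x$ the first and fourth rows of the Cartier--Manin matrix are $(a_2,1,0,0)$ and $(0,0,1,a_8)$, hence automatically independent.)

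The remainder of the proposal has a genuine gap, beyond being only a plan. You propose to cut out each stratum $Z_\mu$ by ``rank conditions on $M$ and on words in $M$ and its Frobenius conjugate,'' but the Ekedahl--Oort type is \emph{not} determined by the Cartier--Manin matrix. $M$ only records $V$ restricted to $H^0(X,\Omega^1_X)$, whereas building the canonical filtration requires taking orthogonal complements inside $H^1_{dR}(X)$ and then applying $V$ to classes that do not lie in $H^0(X,\Omega^1_X)$: for instance, separating $[3,1]$ from $[3,2]$, or $[4,1]$ from $[4,2]$, requires computing $Y_3=V(Y_2^{\perp})$, and $Y_2^{\perp}\not\subseteq H^0(X,\Omega^1_X)$, so no polynomial identity in $M$ and its Frobenius twists can detect $v(3)$. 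This is exactly why the paper computes an explicit basis of the full de Rham cohomology (following K\"ock--Tait) and the action of $V$ on the classes $\gamma_i$ in Lemma~\ref{lemma V(basis)}; your plan omits this ingredient entirely and would stall at precisely the strata where the theorem has content. Two further points: the lower bound ``every component of $Z_\mu\cap\mathcal{H}_4$ has dimension $\ge 7-|\mu|$'' does not follow merely from $\operatorname{codim}_{\mathcal{A}_4}V_\mu=|\mu|$; one would need $\overline{V_\mu}$ to be locally set-theoretically cut out by $|\mu|$ equations, which you do not justify (the paper sidesteps this by computing each locus exactly, e.g.\ the equation $a_3a_7=a_4a_6$ for $v(2)=1$ and the normal forms $(\ref{hyper equation [3,2]})$ and $(\ref{hyper equation [4,1]})$, together with explicit witness curves and semicontinuity). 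And the codimension and irreducibility assertions themselves are left as ``solve the equations and rule out spurious components,'' which you correctly identify as the main bottleneck --- but that bottleneck is the theorem, so the proposal cannot be accepted as a proof.
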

Part of Theorem \ref{hyper theorem} was known.  Frei \cite{SF} proved that hyperelliptic curves in odd characteristic cannot have $a$-number $g-1$. Glass and Pries (\cite[Theorem 1]{Glass2005}) showed that the
intersection of $\mathcal{H}_g$ with the locus $V_l$ of $p$-rank $\leq l$
has codimension $g-l$ in characteristic $p>0$. Pries (\cite[Theorem 4.2]{MR2569747}) showed that
$Z_{[2]} \cap \mathcal{H}_4$ has dimension $5$ for $p\geq 3$.

The following result shows that  certain Ekedahl-Oort strata in $\mathcal{M}_4$ are not empty.
\begin{thm}\label{example E-O [4,2]}
Let $k$ be an algebraically closed field of characteristic $p$. For any odd prime $p$ with $p\equiv \pm 2\,(\, {\rm mod} \, 5)$,
the loci $Z_{[4,2]}$ and $Z_{[4,3]}$ in $\mathcal{M}_4$ are non-empty. For any prime $p\equiv - 1\,(\, {\rm mod} \, 5)$,  there exist superspecial curves of genus $4$ in characteristic $p$. 
\end{thm}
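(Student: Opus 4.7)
\emph{Plan.} My plan is to construct explicit genus-$4$ curves carrying a faithful $\mu_5$-action and to read off the Ekedahl-Oort type by computing the Cartier operator on $H^0(\Omega^1)$ eigenspace-by-eigenspace, with the residue class of $p$ modulo $5$ controlling the block structure of that operator.

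Concretely, I would consider the family
\[C_\lambda : y^5 = x(x-1)(x-\lambda), \qquad \lambda \in k\setminus\{0,1\},\]
of cyclic degree-$5$ covers of $\mathbb{P}^1$; Riemann-Hurwitz gives $g(C_\lambda)=(5-1)(3-1)/2=4$. The automorphism $\sigma\colon y\mapsto\zeta_5 y$ induces a character decomposition $H^0(C_\lambda,\Omega^1)=\bigoplus_{m=1}^{4}U_m$, and a classical computation with monomial differentials $x^i\,\diff x/y^k$ shows the eigenspace dimensions are $(2,1,1,0)$ in a suitable numbering. The Cartier/Verschiebung operator $V$ is $p^{-1}$-semilinear and maps $U_m$ into $U_{mp^{-1}\bmod 5}$; a standard formula of Yui/Bouw type expresses its matrix entries as explicit coefficients of monomials in $f(x)^{(p-1)m/5}$.

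For $p\equiv \pm 2\pmod 5$ the map $m\mapsto mp^{-1}$ is a single $4$-cycle on $(\mathbb{Z}/5)^\times$, so the block structure of $V$ is a cyclic orbit traversing the eigenspaces in dimensions $2\to 1\to 0\to 1\to 2$. Generically one has $\dim\ker V=2$, and the Ekedahl-Oort type among those with $a$-number $2$ is pinned down by the ranks of $V, V^2, V^3$ restricted to each $U_m$. I would identify explicit $\lambda_0,\lambda_1$ for which the resulting rank profiles match those of $[4,2]$ and $[4,3]$ respectively, existence being a non-triviality check on the relevant polynomials in $\lambda$ (which reduce modulo $p$ to binomial-coefficient identities). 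For the superspecial assertion with $p\equiv -1\pmod 5$, the map $m\mapsto mp^{-1}$ pairs $U_1\leftrightarrow U_4$ and $U_2\leftrightarrow U_3$; since $U_4=0$ in our numbering, the kernel question reduces to the $2\times 2$ block $V|_{U_2\oplus U_3}$, and I would exhibit a $\lambda_0$ making both diagonal pieces vanish. Alternatively, and perhaps more cleanly, one can realise a superspecial genus-$4$ curve as a suitable quotient of the Fermat curve $x^5+y^5+z^5=0$, whose superspeciality for $p\equiv -1\pmod 5$ is classical (Shioda-Katsura), and exploit the fact that superspecialness passes to quotients of Jacobians.

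The main obstacle is passing from the $a$-number, which is $2$ for both target strata, to the exact Ekedahl-Oort type: all six types $[4,1], [4,2], [4,3], [3,1], [3,2], [2,1]$ share $a=2$ and are distinguished only by the canonical filtration on the Dieudonné module. I would address this by recording the full sequence of ranks of the iterates $F^iV^j$ on each eigenspace $U_m$ and matching it with Oort's combinatorial description of final types; the $\mu_5$-equivariance reduces this to a finite check. A secondary but genuine difficulty is verifying that the requisite vanishing conditions on $\lambda$ admit solutions for \emph{every} $p$ in the stated congruence class, not just for infinitely many — this amounts to showing that certain binomial-coefficient polynomials are not identically zero modulo $p$, uniformly in $p$.
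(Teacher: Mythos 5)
Your overall strategy coincides with the paper's: the same family $y^5=x(x-1)(x-\lambda)$ of $\mu_5$-covers, the same eigenspace decomposition with dimensions $(0,1,1,2)$, and the same use of $p\bmod 5$ to control how the Cartier operator permutes eigenspaces. But as written the proposal has a genuine gap at exactly the point you flag as ``the main obstacle.'' The types $[4,1]$ and $[4,2]$ both have $a$-number $2$, $p$-rank $0$, $v(2)=1$ and are separated only by $v(3)=\dim V(Y_3)$, where $Y_3=V(Y_6)$ and $Y_6=Y_2^{\perp}$ is a $6$-dimensional subspace of $H^1_{\DR}(X)$ that is \emph{not} contained in $H^0(X,\Omega^1_X)$. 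Your proposed remedy --- ``recording the ranks of the iterates $F^iV^j$ on each eigenspace $U_m$'' --- cannot work as stated, because the $U_m$ exhaust only $H^0(\Omega^1_X)$ and $F$ annihilates that subspace; no amount of iterating the Cartier operator on holomorphic differentials detects $v(3)$. What is actually needed is an explicit basis of $H^1_{\DR}(X)$ (\v{C}ech representatives on the two standard affines) together with the action of $V$ on the classes lying over $H^1(X,\mathcal{O}_X)$ and the symplectic pairing identifying $Y_2^{\perp}$; this is the technical core of the paper (its Lemma on $H^1(X,\mathcal O_X)$, the de Rham basis $\alpha_{n,l},\beta_{n,l}$, and the resulting computation $Y_3=\langle\alpha_{2,0},\alpha_{4,0},\alpha_{4,1}\rangle$, $V(Y_3)=\langle\alpha_{2,0}\rangle$), and it is missing from your plan. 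Note that $[4,3]$ is easier: there $v(2)=0$, which \emph{is} visible from $\mathcal C^2=0$ on $H^0(\Omega^1_X)$ together with $\mathrm{rank}\,\mathcal C=2$, so for that stratum your eigenspace bookkeeping suffices.

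Two further points. First, the existence check you defer (``a non-triviality check on the relevant polynomials in $\lambda$, uniformly in $p$'') is where the actual work lies; the paper resolves it by concrete choices --- $\xi\in k\setminus\mathbb F_p$ for $[4,2]$, and the symmetric model $y^5=x(x-\xi)(x+\xi)$ for $[4,3]$ and the superspecial case, where the relevant coefficients of $x^{p-1}$ in $x^{c}(x^2-\xi^2)^{c}$ vanish for parity reasons. You should supply an argument of this kind rather than assert it. Second, the proposed ``cleaner'' alternative for the superspecial case via the Fermat quintic fails: $x^5+y^5+z^5=0$ has genus $6$, and Riemann--Hurwitz ($10=|G|(2g'-2)+\mathrm{ram}$) shows it has no genus-$4$ quotient by a nontrivial automorphism group, so no superspecial genus-$4$ curve arises that way. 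The direct Cartier computation on $y^5=x(x-\xi)(x+\xi)$, as in the paper, is the route to take.
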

To prove Theorem \ref{example E-O [4,2]}, we use  cyclic covers of the projective line in positive characteristic. 
Furthermore, we  give an alternative but much shorter proof of a result of Kudo \cite{2018arXiv180409063K} showing that there exists a superspecial curve of genus $4$ in characteristic $p$ for all $p$ with $p\equiv 2\,({\, \rm  mod}\,  3)$.
Related results on  Newton polygons of cyclic covers of the projective line and on the existence of curves with given Newton polygon can be found in \cite{2018arXiv180506914L,2018arXiv180504598L}.

\section{Proof of Theorem \ref{hyper theorem}}\label{section proof of theorem 1}
Let  $X$ be a hyperelliptic curve of genus $4$ defined over $k$ with $p=3$. Before giving the proof of Theorem \ref{hyper theorem}, we prove several propositions needed for Theorem \ref{hyper theorem} and give a basis of the de Rham cohomology of a hyperelliptic curve of genus $4$ defined over $k$.

We first show that any smooth hyperelliptic curve of genus $4$ has $a$-number at most 2. 
\begin{prop}\label{lemma H4 T2 P=3 }
A hyperelliptic curve of genus $4$ in characteristic $3$ has $a$-number at most $2$.
\end{prop}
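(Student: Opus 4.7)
The plan is to exhibit the Cartier operator on a hyperelliptic curve of genus $4$ in characteristic $3$ as an explicit $4 \times 4$ matrix and to show that if its rank were at most $1$, the polynomial defining the curve would fail to be squarefree.

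I would start by choosing an affine model $y^2 = f(x)$ with $f \in k[x]$ squarefree of degree $9$ or $10$, together with the basis of regular differentials $\omega_i = x^i\, dx/y$ for $i = 0, 1, 2, 3$. Since the $a$-number of $X$ equals the dimension of the kernel of the Cartier operator $C$ on $H^0(X, \Omega^1)$, it suffices to show that $C$ has rank at least $2$. To compute $C$ I would use $dx/y = f\, dx/y^3$ (valid in characteristic $3$ because $y^{p-1} = y^2 = f$), the semi-linear identity $C((1/y)^p g\, dx) = (1/y)\, C(g\, dx)$, and the elementary rules $C(x^{jp+p-1}\, dx) = x^j\, dx$ and $C(x^r\, dx) = 0$ for $0 \leq r < p-1$. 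Writing $f = \sum_j f_j x^j$ and setting $f_k = 0$ for $k < 0$ or $k > 10$, this yields
$$C(\omega_i) = \sum_{\ell = 0}^3 f_{3\ell + 2 - i}^{1/3}\,\omega_\ell \qquad (i = 0, 1, 2, 3),$$
the higher-index terms $\ell \geq 4$ vanishing because then $3\ell + 2 - i \geq 11$. Cubing every entry (a field automorphism of $k$) preserves rank, so the task reduces to proving that
$$N = \begin{pmatrix} f_2 & f_1 & f_0 & 0 \\ f_5 & f_4 & f_3 & f_2 \\ f_8 & f_7 & f_6 & f_5 \\ 0 & f_{10} & f_9 & f_8 \end{pmatrix}$$
has rank at least $2$.

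Assume for contradiction that $\operatorname{rank}(N) \leq 1$. The $2 \times 2$ minor from rows $1, 2$ and columns $1, 4$ equals $f_2^2$ (thanks to the zero in the top-right corner), forcing $f_2 = 0$; the analogous minors from rows $2, 3$ and $3, 4$ then give $f_5 = 0$ and $f_8 = 0$. The first and last columns of $N$ now vanish, so the rank-$1$ condition reduces to proportionality of the rows of the remaining middle $4 \times 2$ block. If $(f_0, f_1) = (0, 0)$, then combined with $f_2 = 0$ this gives $x^3 \mid f$, contradicting squarefreeness. Otherwise there exist scalars $\lambda_1, \lambda_2, \lambda_3$ with $(f_{3j}, f_{3j+1}) = \lambda_j (f_0, f_1)$ for $j = 1, 2, 3$, assembling into
$$f(x) = (f_0 + f_1 x)\bigl(1 + \lambda_1 x^3 + \lambda_2 x^6 + \lambda_3 x^9\bigr).$$
The closing move is the characteristic-$3$ identity $1 + \lambda_1 x^3 + \lambda_2 x^6 + \lambda_3 x^9 = h(x)^3$ with $h(x) = 1 + \lambda_1^{1/3} x + \lambda_2^{1/3} x^2 + \lambda_3^{1/3} x^3$; since $\deg f \geq 9$ forces $h$ to be non-constant, $f$ acquires a root of multiplicity at least $3$, contradicting smoothness of $X$. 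Hence $\operatorname{rank}(N) \geq 2$ and the $a$-number is at most $2$.

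The main obstacle I foresee is the initial derivation of $N$: one must track indices carefully through the semi-linearity of $C$ and verify that only the basis elements $\omega_0, \ldots, \omega_3$ appear on the right-hand side. After that, the zeros in the corners of $N$ dictate the rank analysis almost mechanically, and the cube factorization works precisely because at $p = 3$ one has $(p-1)/2 = 1$, so $f$ itself (rather than a higher power $f^{(p-1)/2}$) governs the matrix entries, producing the sparsity pattern that makes the argument close.
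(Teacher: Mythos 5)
Your proof is correct, and while it starts from the same object as the paper --- the Cartier--Manin matrix with respect to the basis $x^i\,\mathrm{d}x/y$ --- the way you obtain the rank bound is genuinely different. The paper first normalizes the model: it places a branch point at $0$ and rescales so that $f(x)=x^9+a_8x^8+\dots+a_2x^2+x$, i.e.\ $a_0=0$ and $a_1=a_9=1$. With that normalization two entries of the matrix $(\ref{Cartier Manin})$ are identically $1$ (in positions $(1,2)$ and $(4,3)$), so the first and fourth rows are visibly independent and $\operatorname{rank}(H)\geq 2$ is immediate; the entire content is pushed into the choice of normal form. You instead keep a general squarefree $f$ of degree $9$ or $10$, assume $\operatorname{rank}(N)\leq 1$, and extract a contradiction: the corner zeros force $f_2=f_5=f_8=0$ via the $2\times 2$ minors on columns $1$ and $4$, proportionality of the remaining rows factors $f$ as $(f_0+f_1x)(1+\lambda_1x^3+\lambda_2x^6+\lambda_3x^9)$, and the second factor is a cube in characteristic $3$, contradicting squarefreeness. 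Your argument is longer but buys something: it does not depend on a choice of affine model or on moving a branch point to the origin, it covers the degree-$10$ case directly, and it makes visible the geometric reason the bound holds (a rank-one Cartier matrix would force a repeated root). The paper's version is the more economical one once the normalization of Remark~\ref{rmk finite fibre} is in place, and that normalization is reused throughout Section~\ref{section proof of theorem 1}, which is why the paper sets it up anyway. Both proofs are complete; your index bookkeeping for $C(\omega_i)=\sum_\ell f_{3\ell+2-i}^{1/3}\omega_\ell$ matches the matrix $(\ref{Cartier Manin})$ after the substitutions $f_0=0$, $f_1=f_9=1$, $f_{10}=0$.
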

\begin{proof}
Any smooth hyperelliptic curve $X$ can be written as $y^2=f(x)$ with $f(x)=\sum_{i=0}^9 a_i x^i \in k[x]$ and ${\rm disc}(f)\neq 0$. By putting a branch point at $0$ and by scaling we may assume that $a_1=a_9=1$ and
\begin{equation}\label{hyper equation}
f(x)=x^9+a_8x^8+\dots+a_2x^2+x
\end{equation}
with $a_i\in k$ for $i=2,\dots,8$.
As a basis of $H^0(X,\Omega_X^1)$ we choose
$
 \omega_i={x^{i}}/{y}\diff x$ for  $i=0,\dots,3$. 
Then the Cartier-Manin matrix $H$, i.e. the matrix of the Cartier operator acting on the holomorphic differentials with respect to a basis, of curve $X$~is 
\begin{align}
H=\left ( \begin{array}{lcll}
a_2 & 1 & 0 &0 \\ 
a_5 & a_4 & a_3 &a_2\\
a_8 & a_7 & a_6 &a_5\\
0 & 0 & 1 &a_8\\
\end{array}\right )^{1/3}\, ,\label{Cartier Manin}
\end{align}
where $H^{1/3}=(h_{ij}^{1/3})$ if $H=(h_{ij})$. Since ${\rm rank}(H)\geq 2$, we have for the $a$-number $a=4-{\rm rank}(H)\leq 2 $.
\end{proof}
\begin{rmk}\label{rmk finite fibre}
Note that the map from the parameter space of the $a_i$ ($i=2,\dots,8$) to the hyperelliptic locus has finite fibres.
Indeed, if $\phi$ is an isomorphism between two smooth hyperelliptic curves given by $f_1(x)=\sum_{i=1}^9a_ix^i$ and $f_2(x)=\sum_{i=1}^9b_ix^i$ as in $(\ref{hyper equation})$ that induces an isomorphism of $\mathbb{P}^1$ fixing $0$ and $\infty$, then $\phi$ is given by scaling $x\mapsto \alpha x$ and $y\mapsto \beta y$. We obtain $\alpha ^9/\beta ^2=\alpha /\beta^2=1$ and hence $\alpha^8=1, \beta^2=\alpha$. 
\end{rmk}
We let $Y$ be the open subset of affine space with coordinates $(a_2,\dots,a_8)$ such that ${\rm disc}(f)\neq 0$.
Denote by $T_{a}$  the locus of curves of genus $g$ with $a$-number $\geq a$ in $\mathcal{M}_4$ and by $X_f$ the smooth projective hyperelliptic curve defined by the equation $y^2=f(x)$ as in $(\ref{hyper equation})$. Let $H_f$ be the Cartier-Manin matrix of the curve $X_f$. In the following we simply write $X$ (resp. $H$) for $X_f$ (resp. $H_f$). Now we give a result about the intersection $\mathcal{H}_4\cap T_a$ with $a\leq 2$.

\begin{prop}\label{hyper prop H4 cap T2}
The locus of $\mathcal{H}_4\cap T_a$ with $a\leq 2$ is irreducible of  codimension $a(a+1)/2$. 
\end{prop}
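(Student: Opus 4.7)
The plan is to analyze the rank stratification of the Cartier--Manin matrix $H$ from Proposition~\ref{lemma H4 T2 P=3 } directly on the affine parameter space $Y$ of Remark~\ref{rmk finite fibre}. Since that remark guarantees $Y \to \mathcal{H}_4$ has finite fibres, irreducibility and codimension of the rank loci transfer from $Y$ to $\mathcal{H}_4$. Because $H$ and its entrywise cube $M$ have the same rank, I would work with $M$. Using the entries $1$ at positions $(1,2)$ and $(4,3)$, a short sequence of row and column operations yields
\[
\operatorname{rank}(M) = 2 + \operatorname{rank}\begin{pmatrix}\alpha & \beta\\ \gamma & \delta\end{pmatrix},
\]
where $\alpha = a_5 - a_2 a_4$, $\beta = a_2 - a_3 a_8$, $\gamma = a_8 - a_2 a_7$, and $\delta = a_5 - a_6 a_8$. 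This in particular recovers the bound $a \leq 2$.

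The case $a = 0$ is trivial. For $a = 1$, the locus in $Y$ is cut out by the single polynomial $\alpha\delta - \beta\gamma$, which contains the monomial $-a_5^2$ and hence is nonzero of codimension $1$. To prove irreducibility, I would view $\alpha\delta - \beta\gamma$ as a quadratic in $a_5$ over the field $k(a_2, a_3, a_4, a_6, a_7, a_8)$ and verify that its $a_5$-discriminant is not a square there; this forces the polynomial to be irreducible, and Remark~\ref{rmk finite fibre} descends irreducibility to an irreducible codimension-$1$ subvariety of $\mathcal{H}_4$.

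For $a = 2$, the four equations $\alpha = \beta = \gamma = \delta = 0$ can be reorganized: $\alpha = 0$ and $\gamma = 0$ let us solve $a_5 = a_2 a_4$ and $a_8 = a_2 a_7$, and substituting into $\beta = 0$ and $\delta = 0$ gives the factored system $a_2(1-a_3 a_7) = 0$ and $a_2(a_4 - a_6 a_7) = 0$. The branch $a_2 = 0$, which forces $a_5 = a_8 = 0$, cuts out an irreducible subvariety of $Y$ of codimension $3$ whose image in $\mathcal{H}_4$ is the asserted stratum. The remaining branch $\{a_3 a_7 = 1,\ a_4 = a_6 a_7,\ a_2 \neq 0\}$ has codimension $4$ a priori. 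The key technical point is to show that it lies entirely in the discriminant locus of $f$, so that it contributes nothing inside $Y$ and the main branch gives the entire intersection $\mathcal{H}_4 \cap T_2$.

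The principal obstacle is precisely this last claim: that $\mathrm{disc}(f)$ vanishes identically on the three-parameter family $\{a_3 a_7 = 1,\ a_4 = a_6 a_7,\ a_5 = a_2 a_4,\ a_8 = a_2 a_7,\ a_2 \neq 0\}$ in characteristic $3$. Sample calculations suggest that on this family $f$ always shares a nontrivial factor with $f'$, sometimes via factorizations such as $(1+x^2)^3(1+x)^2$ after scaling. I would aim to prove this identity via an explicit resultant computation in the parameters $(a_2, a_6, a_7)$, or by exhibiting a Frobenius-type factorization of $f$ tied to the characteristic-$3$ structure.
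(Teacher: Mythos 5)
Your proposal is correct and follows essentially the same route as the paper: one stratifies the parameter space $Y$ by the rank of the Cartier--Manin matrix and transfers the result to $\mathcal{H}_4$ via the finite-fibre map, your $2\times 2$ reduction being a cleaner packaging of the paper's Gaussian elimination (your $\alpha\delta-\beta\gamma$ is exactly the paper's equation for $T_1$), and the obstacle you flag for $a=2$ is resolved precisely by the Frobenius-type factorization you anticipate, since on the extra branch one gets $f=(x^2+(a_5/a_8)^{1/3}x+(a_2/a_8)^{1/3})^3\,(x^3+a_8x^2+(a_8/a_2)x)$ in characteristic $3$, forcing $\mathrm{disc}(f)=0$. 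The only additions needed for a complete write-up are explicit witnesses showing each rank locus actually meets $\{\mathrm{disc}(f)\neq 0\}$ (e.g.\ $a_2=a_7=a_8=1$, other $a_i=0$ for rank $3$, and $a_3=1$, $a_7=2$, other $a_i=0$ for rank $2$), without which the nonemptiness and hence the codimension claims are not yet closed.
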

\begin{proof}
For $a=0$, we consider the curve with equation $y^2=f(x)=x^9+tx^5+x$ defined over $k$ where $t$ is a primitive element in $\mathbb{F}_9$. Then ${\rm disc}(f)=2\neq 0$ and by $(\ref{Cartier Manin})$ we have ${\rm rank}(H)=4$.  Hence there is a curve with $a=0$ and note that $\mathcal{H}_4$ is irreducible of dimension $7$. Then by semicontinuity the generic hyperelliptic curve is ordinary and $T_0\cap \mathcal{H}_4$
is irreducible of dimension~$7$.

The condition  $a=1$ means ${\rm rank}(H)=3$. We show that the locus in $Y$ with ${\rm rank}(H)=3$ is given by
$$
(a_8a_6-a_5)(a_2a_4-a_5)+(a_2-a_3a_8)(a_2a_7-a_8)=0\, .
$$

Indeed if $a_2=a_8=0$ and ${\rm disc}(f)\neq 0$, then by Gauss reduction  the rank of $H$ is equal to the rank of 
\begin{align*}
\left ( \begin{array}{lcll}
0 & 1 & 0 &0 \\ 
a_5 & 0 & 0 &0\\
0 & 0 & 0 &a_5\\
0 & 0 & 1 &0\\
\end{array}\right )^{1/3} \, .
\end{align*}
Since we want ${\rm rank}(H)>2$ we must have $a_5\neq 0$. Then this implies  ${\rm rank}(H)=4$ and the curve is ordinary.

Suppose one of $a_2,a_8$ is not zero; by symmetry we can assume $a_2\neq 0$ and the rank of $H$ is equal to the rank of 
\begin{align*}
\left ( \begin{array}{lcll}
a_2 & 1 & 0 &0 \\ 
0 & a_4-a_5/a_2 & a_3 &a_2\\
0 & a_7-a_8/a_2 & a_6 &a_5\\
0 & 0 & 1 &a_8\\
\end{array}\right ) ^{1/3}\, .\\
\end{align*}

We have $\det(H)=0$ as ${\rm rank}(H)=3$ and hence
\begin{align}
(a_8a_6-a_5)(a_2a_4-a_5)+(a_2-a_3a_8)(a_2a_7-a_8)=0\, . \label{T_1}
\end{align}
Note that equation $(\ref{T_1})$ can be rewritten as 
$$
a_7a_2^2 + 2(a_3a_7a_8 + a_4a_5 + 2a_2a_4a_6a_8 + a_2a_8)a_2 + a_3a_8^2 + a_5^2 +
    2a_5a_6a_8=0\, .
$$
This is a $6$-dimensional subspace of $Y$, which is irreducible. Also if we take $a_2=a_7=a_8=1$ and $a_i=0$ for $i\neq 2,7,8$, then ${{\rm disc}(f)=2\neq 0}$ and ${\rm rank}(H)=3$. Hence there is a curve with $a=1$ and by semicontinuity $T_1\cap \mathcal{H}_4$ is irreducible of codimension 1.

For $a=2$, we want to show that the locus in $Y$ with $a=2$ is given by $a_2=a_5=a_8=0$. Since we want ${\rm rank}(H)=2$ and the first and fourth row of $H$ are linearly independent, we have several situations to deal with:
$i)$ $a_2=0$, $ii)$ $a_8=0$ and $iii)$ $a_2a_8\neq 0$.

For the first two cases, if the ${\rm rank}(H)=2$, then the second and third rows of $H$ are linear combinations of the first and fourth rows. Therefore we have $a_2=a_5=a_8=0$. 
For the third case, if $a_2a_8\neq 0$, let $e_i$ to be the $i$-th row of $H$, then with some $b,c,s,t \in k$ we have 
$$
be_1+ce_4=e_2  ,~
se_1+te_4=e_3\, .
$$
This implies $a_3=a_2/a_8, a_4=a_5/a_2, a_6=a_5/a_8, a_7=a_8/a_2$ and hence
$$
f(x)=(x^2+(a_5/a_8)^{1/3}x+(a_2/a_8)^{1/3})^3(x^3+a_8x^2+(a_8/a_2)x)\, ,
$$
which does not have distinct roots, a contradiction. Then we have $a_2=a_5=a_8=0$, which defines an irreducible sublocus in $Y$. Indeed, if we take $a_3=1,a_7=2$ and $a_i=0$ for $i\neq 3,7$, then ${\rm disc}(f)=1\neq 0$ and ${\rm rank}(H)=2$. So we find a curve with ${\rm rank}(H)=2$. Hence by semicontinuity $T_2\cap \mathcal{H}_4$ is irreducible of codimension 3.
\end{proof}

We have seen that any hyperelliptic curve over $k$ with $a$-number $2$ is given by an equation $y^2=f(x)$ as in $(\ref{hyper equation})$ with $(a_2,\dots,a_8) \in Y$ and $a_2=a_5=a_8=0$. We will now use the de Rham cohomology $H^1_{dR}(X)$ for a curve $X$ of genus $g$. Recall that this is a vector space of dimension $2g$ provided with a non-degenerate pairing,  cf. \cite[Section 12]{Oort1999}. Now we consider the action of Verschiebung $V$ on the de Rham cohomology of a curve $X$ given by equation $(\ref{hyper equation})$ with $a$-number $2$.
First we give a basis of the de Rham cohomology  of a hyperelliptic curve with $a$-number 2. 
Let $X$ be a smooth irreducible complete curve over $k$ with equation 
\begin{align}\label{hyper equation a=2}
y^2=f=x^9+a_7x^7+a_6x^6+a_4x^4+a_3x^3+x,~ a_i\in k\, ,
\end{align}
where the discriminant of $f$ is non-zero. 
Let $\pi : X\to \mathbb{P}^1$ be the hyperelliptic map. Take an open affine cover $\mathcal{U}=\{U_1,U_2\}$ with $U_1=\pi ^{-1}(\mathbb{P}^1-\{0\})$ and $U_2=\pi ^{-1}(\mathbb{P}^1-\{\infty\})$. By Section \cite[Section 5]{MR0241435}, the de Rham cohomology $H^1_{dR}(X)$  can be described as 
$$
H^1_{dR}(X)=Z_{dR}^1(\mathcal{U})/B_{dR}^1(\mathcal{U})
$$
with
$
Z_{dR}^1(\mathcal{U})=\{(t,\omega_1,\omega_2)|~t\in \mathcal{O}_X(U_1\cap U_2),\omega_i\in \Omega_X^1(U_i),\diff t=\omega_1-\omega_2\}
$
and 
$B_{dR}^1(\mathcal{U})=\{(t_1-t_2,\diff t_1,\diff t_2)|~t_i\in \mathcal{O}_X(U_i)\}$. Then $V(H^1_{dR}(X))=H^0(X,\Omega_X^1)$ and $V$ coincides with the Cartier operator on $H^0(X,\Omega_X^1)$. 

For $1\leq i \leq 4$, put $s_i(x)=xf'(x)-2if(x)$ with $f'(x)$ the formal derivative of $f(x)$ and write $s_i(x)=s_i^{\leq i}(x)+s_i^{> i}(x)$ with $s_i^{\leq i}(x)$ the sum of monomials of degree $\leq i$.
By K{\" o}ck and Tait \cite{2017arXiv170903422K}, $H^1_{dR}(X)$ has a basis with respect to $\{U_1,U_2\}$ consisting of the following residue classes with representatives in $Z_{dR}^1(X)$:
\begin{align}
\gamma_i&=[(\frac{y}{x^i},\frac{\psi_i(x)}{2x^{i+1}y}{\rm d}x,-\frac{\phi_i(x)}{2x^{i+1}y}{\rm d}x)], ~ i=1,\dots,4, \label{hyper de rham basis H^1} \\
\lambda_j&=[(0,\frac{x^j}{y}{\rm d}x,\frac{x^j}{y}{\rm d}x)], ~ j=0,\dots,3, \label{hyper de rham basis H^0}
\end{align}
where $\psi_i(x)=s_i^{\leq i}(x)$ and $\phi_i(x)=s_i^{> i}(x)$. Also we have $\langle\gamma_i,\lambda_j \rangle \neq 0$ if $j=i-1$ and $\langle\gamma_i,\lambda_j \rangle = 0$ otherwise. 
Now we give the action of Verschiebung.
\begin{lem}\label{lemma V(basis)}
Let $X$ be a smooth hyperelliptic curve over $k$ with equation $(\ref{hyper equation a=2})$. Let $\{U_1,U_2\}$ be a covering of $X$ as above. Then for the basis of $H^1_{dR}(X)$ given by $(\ref{hyper de rham basis H^1})$ and $(\ref{hyper de rham basis H^0})$ , we have $V(\lambda _0)=V(\lambda _3)=V(\gamma_2)=V(\gamma_3)=0$ and 
\begin{align*}
V(\lambda _1)&=a_7^{1/3}\lambda_2+a_4^{1/3}\lambda_1+\lambda_0\, ,
V(\lambda _2)=\lambda_3+a_6^{1/3}\lambda_2+a_3^{1/3}\lambda_1\, ,\\
V(\gamma_1)&=\lambda_2+a_6^{1/3}\lambda_1+a_3^{1/3}\lambda_0,
V(\gamma_4)=a_4^{1/3}\lambda_2+(1-(a_3a_7)^{1/3}+(a_4a_6)^{1/3})\lambda_1+a_6^{1/3}\lambda_0\, .
\end{align*}
\end{lem}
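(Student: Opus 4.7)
The plan is to handle the two halves of the basis separately. For $\{\lambda_0,\dots,\lambda_3\}$, one exploits that $V$ restricted to $H^0(X,\Omega_X^1)\subset H^1_{dR}(X)$ coincides with the Cartier operator $C$ and that the $\lambda_j$ form precisely the basis used for the Cartier-Manin matrix $(\ref{Cartier Manin})$. Reading off its columns after setting $a_2=a_5=a_8=0$ yields $V(\lambda_0)=V(\lambda_3)=0$ together with the stated formulas for $V(\lambda_1)$ and $V(\lambda_2)$.

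For the classes $\gamma_i$ I would work at the level of \v{C}ech cocycles. The key observation is that for any $(t,\omega_1,\omega_2)\in Z_{dR}^1(\mathcal{U})$ with $dt=\omega_1-\omega_2$, the local Cartier operators $C(\omega_1)$ on $U_1$ and $C(\omega_2)$ on $U_2$ agree on $U_1\cap U_2$: the Cartier operator factors through the Cartier isomorphism $\mathcal{H}^1(\Omega_X^\bullet)\simeq\Omega_{X^{(p)}}^1$ and in particular kills exact forms, so $C(\omega_1)-C(\omega_2)=C(dt)=0$. Therefore $C(\omega_1)$ and $C(\omega_2)$ glue to a global holomorphic $1$-form representing $V[(t,\omega_1,\omega_2)]\in H^0(X,\Omega_X^1)$. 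This reduces the computation of each $V(\gamma_i)$ to a single local Cartier computation.

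The tools for those computations in characteristic $3$ are: the rewriting $g(x)/y\cdot dx=(1/y)^3\cdot g(x)f(x)\,dx$ (from $1/y=f/y^3$); the $3^{-1}$-linearity $C(h^3\omega)=h\,C(\omega)$; and the explicit formula $C\big(\sum_n c_n x^n\,dx\big)=\sum_m c_{3m-1}^{1/3}x^{m-1}\,dx$. For $\gamma_2$ the component $\omega_1^{(2)}$ vanishes identically since $\psi_2=0$, so $V(\gamma_2)=0$. For $\gamma_3$ one writes $\omega_1^{(3)}=1/(2x^3y)\,dx=(1/(2xy))^3\cdot f\,dx$; since $f=x^9+a_7x^7+a_6x^6+a_4x^4+a_3x^3+x$ contains no monomial of degree $\equiv 2\pmod 3$, the Cartier operator kills $f\,dx$, so $V(\gamma_3)=0$. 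For $\gamma_1$ one uses $\omega_1^{(1)}=(1/y)^3\cdot(-f/(2x))\,dx$, and extracting the coefficients of $x^2,x^5,x^8$ in $-f/(2x)$ gives the claimed $a_3^{1/3}\lambda_0+a_6^{1/3}\lambda_1+\lambda_2$. For $\gamma_4$ one uses the $U_2$-component $\omega_2^{(4)}=(1/y)^3\cdot f\cdot(-x^4+a_7x^2-a_6x)/2\,dx$, which is cleaner than $\omega_1^{(4)}$, and proceeds analogously; the coefficient of $x^{11}$ vanishes in both products, accounting for the absence of $\lambda_3$.

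The main technical obstacle is the $\lambda_1$-coefficient of $V(\gamma_4)$: the coefficient of $x^5$ in $f\cdot(-x^4+a_7x^2-a_6x)/2$ works out to the three-term expression $1+a_4a_6-a_3a_7$, whose cube root must be reconciled with the claimed $1-(a_3a_7)^{1/3}+(a_4a_6)^{1/3}$. This is resolved by the additivity of Frobenius in characteristic $3$, which makes cube root additive: $(1+a_4a_6-a_3a_7)^{1/3}=1+(a_4a_6)^{1/3}-(a_3a_7)^{1/3}$.
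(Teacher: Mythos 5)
Your proposal is correct and follows essentially the same route as the paper: the paper likewise reduces each $V(\gamma_i)$ to a single local Cartier computation by noting that $V$ kills the exact form $\diff(y/x^i)$, so the two components give the same answer, and it reads the $V(\lambda_j)$ off the Cartier operator on $H^0(X,\Omega^1_X)$. The only (cosmetic) difference is that you evaluate $C$ by the coefficient-extraction formula $C(\sum c_nx^n\diff x)=\sum c_{3m-1}^{1/3}x^{m-1}\diff x$ after factoring out a cube, while the paper uses the equivalent formula $C(h\diff x)=(-\diff^2h/\diff x^2)^{1/3}\diff x$; your write-up actually carries out the $\gamma_2,\gamma_3,\gamma_4$ cases that the paper leaves as "similar arguments," and the coefficient checks (including the vanishing of the $x^{11}$ coefficient and the Frobenius-additivity step for the $\lambda_1$-coefficient of $V(\gamma_4)$) are all consistent.
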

\begin{proof}
Since $V$ coincides with the Cartier operator on $H^0(X,\Omega_X^1)$, we have $V(h{\rm d}x)=(-{\rm d}^{2}h/{\rm d}x^{2})^{1/3} {\rm d}x$ with $h\in k(x)$. We will compute $V(\gamma_1)$ and the rest of the lemma will follow easily by using a similar argument. Note that we always have for $1\leq i \leq 4$ 
\begin{align*}
V(\frac{\psi_i(x)}{2x^{i+1}y}{\rm d}x)=V(-\frac{\phi_i(x)}{2x^{i+1}y}{\rm d}x)
\end{align*}
as $0=V({\rm d}(y/x^i))=V(\frac{\psi_i(x)}{2x^{i+1}y}{\rm d}x)-V(-\frac{\phi_i(x)}{2x^{i+1}y}{\rm d}x)$. So it suffices to compute $V(\frac{\psi_i(x)}{2x^{i+1}y}{\rm d}x)$ instead of computing $V(\gamma_i)$. For $i=1$, we have 
\begin{align*}
V(\frac{\psi_1(x)}{2x^{1+1}y}{\rm d}x)=V(\frac{1}{xy}{\rm d}x)=(-\frac{{\rm d}({\rm d}(\frac{1}{xy})/{\rm d}x)}{dx})^{1/3}\diff x\, .
\end{align*}
Note that ${\rm d}f'(x)/{\rm d}x=0$ and by a calculation we have
\begin{align*}
-\frac{{\rm d}({\rm d}(\frac{1}{xy})/{\rm d}x)}{\diff x}=\frac{x^9+a_6x^6+a_3x^3}{x^3y^3}\, .
\end{align*}
Hence 
$
V(\psi_1(x)/({2x^{1+1}y}){\rm d}x)=(x^2/{y}+a_6^{1/3}x/y+a_3^{1/3}/y)\diff x\, 
$
and we have $V(\gamma_1)=\lambda_2+a_6^{1/3}\lambda_1+a_3^{1/3}\lambda_0$.
\end{proof}

Now we give a proof of Theorem \ref{hyper theorem}.
\begin{proof}[Proof of Theorem \ref{hyper theorem}]
The theorem holds for cases $\mu=[0]$ and $[1]$ by Proposition \ref{hyper prop H4 cap T2} where we showed that $T_a\cap \mathcal{H}_4$ is irreducible with codimension $a(a+1)/2$ for $a\leq 2$. Also  $T_3\cap \mathcal{H}_4$ is empty by Proposition \ref{lemma H4 T2 P=3 }, hence $Z_{\mu}\cap \mathcal{H}_4=\varnothing$ for any $\mu \succeq[3,2,1]$.

We only need to prove  that the theorem is true for the remaining nine Ekedahl-Oort strata, that is six strata consisting of curves with $a$-number 2 and three strata consisting of curves with $a$-number 1.

As an outline of the proof, we first show that for $\mu=[2,1],[3,1],[3,2],$ $[4,1],[4,2]$ and $[4,3]$ the codimension of $Z_{\mu} \cap \mathcal{H}_4$
in $\mathcal{H}_4$ equals the expected codimension  $\sum_{i=1}^n \mu_i$ with $\mu=[\mu_1,\dots,\mu_n]$. For $\mu=[2],[3]$ and $[4]$, combined with the fact $V_l\cap \mathcal{H}_4$ is non-empty of codimension $4-l$ in $\mathcal{H}_4$ for $l=0,1,2$ by Glass and Pries \cite[Theorem 1]{Glass2005}, the intersection $Z_{\mu}\cap \mathcal{H}_4$ also has the expected codimension. In the cases $\mu=[2,1],[3,1],[3,2],[4,1],[4,2]$ and $[4,3]$, a curve with Ekedahl-Oort type $\mu$ can be written as equation $(\ref{hyper equation a=2})$ by the proof of Proposition \ref{hyper prop H4 cap T2}. 

Throughout the proof, denote by $X$ a smooth hyperelliptic curve given by equation $y^2=f(x)$ as in $(\ref{hyper equation a=2})$ with Ekedahl-Oort type $\mu$. 
Denote 
$$
Y_2:= V(H^0(X,\Omega_X^1))=V(\langle\lambda _0,\dots,\lambda _3 \rangle )\text{ and } Y_6:=Y_2^{\perp}
$$
  with respect to the paring $\langle \, , \, \rangle $ on $H^1_{dR}(X)$. Put $v:\{0,1,\dots,8\}\to \{0,1,\dots,4\}$  the final type of $X$. From Lemma \ref{lemma V(basis)} above, we know that $Y_2$ is a $2$-dimensional subspace of $H^0(X,\Omega_X^1)$ generated by $V(\lambda_1)$ and $V(\lambda_2)$.

Let $\mu=[2,1]$. By Proposition $\ref{hyper prop H4 cap T2}$ the intersection  $T_2\cap \mathcal{H}_4$ is irreducible of dimension 4. For any curve $X$ corresponding to a point in $\mathcal{H}_4\cap T_2$,  we have by Lemma~\ref{lemma V(basis)}
\begin{align}
V(Y_2)&=\langle V^2(\lambda_1),V^2(\lambda_2) \rangle  \nonumber=\langle V(a_7^{1/3}\lambda_2+a_4^{1/3}\lambda_1+\lambda_0),V(\lambda_3+a_6^{1/3}\lambda_2+a_3^{1/3}\lambda_1) \rangle \nonumber\\
&=\langle a_7^{1/9}V(\lambda_2)+a_4^{1/9}V(\lambda_1),a_6^{1/9}V(\lambda_2)+a_3^{1/9}V(\lambda_1) \rangle\, . \label{hyper lemma Y_2 basis}
\end{align}
We consider the curve associated to $(a_3,a_4,a_6,a_7)=(1,0,0,2)$. Then ${{\rm disc}(f)=1\neq 0}$. Moreover, $V^n(Y_2)=Y_2$ for any $n\in \mathbb{Z}_{>0}$. Hence the semi-simple rank of $V$ acting on $H^0(X,\Omega_X^1)$ is $2$ and the Ekedahl-Oort type of this curve is $[2,1]$. Since the $p$-rank can only decrease under specialization, the generic point of $\mathcal{H}_4\cap T_2$ has Ekedahl-Oort type $[2,1]$ and $Z_{[2,1]}\cap \mathcal{H}_4$ is irreducible of dimension 4.

Now we move to the case $\mu=[3,1]$. We show that a curve with Ekedahl-Oort type $[3,1]$ has equation $(\ref{hyper equation a=2})$ with $a_7a_3=a_6a_4$ and ${\rm disc}(f)=a_3a_4^2+a_6a_7+1\neq 0$. Then the irreducibility and dimension will follow naturally.

Suppose $X$ has Ekedahl-Oort type $[3,1]$, then $X$ is given by equation $(\ref{hyper equation a=2})$ with $\dim(V(Y_2))=1$. Then by Lemma \ref{lemma V(basis)} and relation $(\ref{hyper lemma Y_2 basis})$, we have $a_3a_7=a_4a_6$. 

Put $Y_3:=V(Y_6)$ then we have 
$$
\dim Y_3=v(6)=v(2)+4-2=3
$$ 
by the properties of the final type $v$. If we take $(a_3,a_4,a_6,a_7)=(0,1,0,0)$, then ${\rm disc}(f)=1\neq 0$. Note that $V(\gamma_1)=\lambda _2$ and $V(\gamma _4)=\lambda_2+\lambda_1$, hence $Y_2=\langle \lambda _3,\lambda _1+\lambda _0 \rangle $ by the Lemma \ref{lemma V(basis)}. Furthermore, it is easy to check that $V^2(\lambda_2)=0$ and $V^n(\lambda_1)=\lambda_1$. Then we get $v(1)=1$. Also there exists an element $\gamma=\sum_{i=1}^3 b_i \gamma _i$ with $b_i\in k$ in $Y_6$ such that $b_1\neq 0$, otherwise it contradicts that $\langle \gamma,\lambda _0+\lambda _1 \rangle =0$. Thus $b_1^{1/3}\lambda_2= V(\gamma)\in Y_3$ and by simple computation we have $\dim V(Y_3)=2$. Then there is a curve with Ekedahl-Oort type $[3,1]$ and by semicontinuity we have the $Z_{[3,1]}\cap \mathcal{H}_4$ is irreducible of dimension 3.

Let $\mu=[3,2]$, we show that the smooth hyperelliptic curve $X$ with Ekedahl-Oort type $[3,2]$ can be written as 
\begin{align}\label{hyper equation [3,2]}
y^2=f(x)=x^9+a_7x^7+\alpha ^3a_7x^6+a_4x^4+\alpha ^3a_4x^3+x
\end{align}
with $a_4,a_7,\alpha \in k^*$ satisfying $\alpha^3 a_7^2+\alpha a_7=a_4+\alpha a_4^2$ and ${{\rm disc}(f)=(a_4\alpha + a_7\alpha^{2} + 1)^9\neq0}$.

Indeed, if the curve $X$ is given by equation $(\ref{hyper equation a=2})$ with Ekedahl-Oort type $[3,2]$, then we have $v(2)=1$ and $v(1)=1$. By Lemma \ref{lemma V(basis)} and relation $(\ref{hyper lemma Y_2 basis})$, the condition $v(2)=1$ implies $a_3a_7=a_4a_6$. Also $Y_6$ is generated by $\lambda_i$ for $i=0,\dots,3$ and $\sum_{j=1}^4b_j\gamma_j$ with $b_j \in k$ and $\langle\sum_{j=1}^4b_j\gamma_j,Y_2\rangle=0$. 

If $a_7=0$, by $a_3a_7=a_4a_6$ we have $(i)~a_6=0$ or $(ii)~a_4=0$. 

If we suppose $a_7=a_6=0$, then $Y_2=\langle a_4^{1/3}\lambda_1+\lambda
_0,\lambda_3+a_3^{1/3}\lambda_1 \rangle $ and $Y_6$ is generated by $\lambda_i$ and $\sum_{j=1}^4b_j\gamma_j$ with 
\begin{equation}\label{equation b1b2b3b4}
b_1+b_2a_4^{1/3}=b_2a_3^{1/3}+b_4=0,~b_1,\dots,b_4\in k\, .
\end{equation}
Write $Y_3=V(Y_6)$. If $a_4=0$, then we have $V^2(Y_2)=\langle \, 0\,\rangle$, a contradiction since $X$ has Ekedahl-Oort type $[3,2]$. Now suppose $a_4\neq 0$, then for all $b_1,b_4$ satisfying $(\ref{equation b1b2b3b4})$ we have
 $$
V(Y_6)=Y_3=\langle Y_2,(1+a_4^{2/9}a_3^{1/9})\lambda_2+(\frac{a_3}{a_4})^{1/9}\lambda_1+a_3^{1/3}\lambda_0)\rangle 
\, . $$
 Since $v(3)=1$, we have 
 $
1+a_4^{2/9}a_3^{1/9}=0
 $, 
which implies $a_3^1a_4^2=-1$. In this case we have ${\rm disc}(f)=a_3^3a_4^6+1=0$, a contradiction.

Now if $a_7=a_4=0$, $Y_6$ is generated by $\lambda_i$ and $\sum_{j=1}^4b_j\gamma_j$ with $b_1=b_2a_3^{1/3}+a_6^{1/3}b_3+b_4=0$.
By Lemma \ref{lemma V(basis)} we have $V(\gamma_4)=\lambda_1+a_6^{1/3}\lambda_0$, hence
\begin{align*}
Y_3=V(Y_6)&=\langle Y_2,V(b_2\gamma_2+\dots+b_4\gamma_4) \rangle =\langle Y_2,V(\gamma_4) \rangle \\
&=\langle \lambda_0,\lambda_3+a_6^{1/3}\lambda_2+a_3^{1/3}\lambda_1,\lambda_1+a_6^{1/3}\lambda_0 \rangle \, .
\end{align*}
Therefore we have $V(Y_3)=Y_2$, a contradiction with~${v(3)=1}$.

Now assume $a_7\neq 0$ and put $\alpha=(a_6/a_7)^{1/3}$. Then we have $a_3=\alpha^3a_4$ by relation $a_7a_3=a_6a_4$, and 
\begin{align}
Y_2=\langle  a_7^{1/3}\lambda_2+a_4^{1/3}\lambda_1+\lambda_0,\lambda_3+\alpha a_7^{1/3}\lambda_2+\alpha a_4^{1/3}\lambda_1\rangle\, .\label{Y_2 [3,2]}
\end{align}
By a similar argument to the above, $Y_6$ is generated by $\lambda_i$ and $\sum_{j=1}^4b_j\gamma_j$ with $\langle \sum_{j=1}^4b_j\gamma_j, Y_2 \rangle=0$, this implies
 $$b_4-\alpha b_1=b_3a_7^{1/3}+b_2a_4^{1/3}+b_1=0, b_i\in k\, .$$ Then $Y_3=V(Y_6)=\langle Y_2,V(b_1\gamma_1+b_4\gamma_4) \rangle$ and this equals $\langle Y_2,\xi \rangle$ with 
\begin{align*}
\xi=(1+\alpha^{1/3}a_4^{1/3})\lambda_2+(\alpha a_7^{1/3}+\alpha ^{1/3})\lambda_1+(\alpha^{1/3}a_4^{1/3}+\alpha^{4/3}a_7^{1/3})\lambda_0 \rangle \, .
\end{align*}
Since $X$ has Ekedahl-Oort type $\mu=[3,2]$, we have $v(3)=1$. Then $V(\langle\, \xi\, \rangle)=V(Y_2)=V(a_7^{1/3}\lambda_2+a_4^{1/3}\lambda_1)$ by relation $(\ref{Y_2 [3,2]})$ and Lemma \ref{lemma V(basis)}. Thus we have 
\begin{align*}
\alpha a_7^{2/3}+(\alpha a_7)^{1/3}=a_4^{1/3}+\alpha^{1/3}a_4^{2/3}
\end{align*}
and hence
\begin{align}\label{prop [3,2] special}
\alpha^3 a_7^2+\alpha a_7=a_4+\alpha a_4^2\, .
\end{align}
If $\alpha=0$, by equality $(\ref{prop [3,2] special})$ we have $a_4=0$ and in equation $(\ref{hyper equation [3,2]})$ we have $f=x^9+a_7x^7+x$ and one can easily show that $v(1)=0$, a contradiction as $X$ has Ekedahl-Oort type $\mu=[3,2]$. By a similar argument we can prove $a_4\neq 0$.
If we take $(a_7,\alpha,a_4)=(2,2,1)$ in equation $(\ref{hyper equation [3,2]})$, we have ${\rm disc}(f)=2\neq 0$. 
 Then there is a curve with Ekedahl-Oort type $[3,2]$ and by semicontinuity we have $Z_{[3,2]}\cap \mathcal{H}_4$ is irreducible of dimension 2.
 
Let $\mu=[4,1]$. We show that any smooth hyperelliptic curve with Ekedahl-Oort type $[4,1]$ can be 
written as
\begin{align}
y^2=f(x)=x^9+a_7x^7+\alpha ^{3} a_7x^6-\alpha^9 a_7x^4-\alpha ^{12}a_7x^3+x \label{hyper equation [4,1]}
\end{align}
with $a_7,\alpha \in k^*$ and ${\rm disc}(f)=2\alpha^{10}a_7 + \alpha^2a_7 + 1\neq 0$. Then it will follow that $Z_{[4,1]}\cap \mathcal{H}_4$ is irreducible of dimension $2$. 
Indeed, if $X$ is given by equation $(\ref{hyper equation a=2})$ with Ekedahl-Oort type [4,1], then $v(2)=1,v(1)=0$ and by Lemma \ref{lemma V(basis)} and relation $(\ref{hyper lemma Y_2 basis})$ we have $a_3a_7=a_4a_6$ . 

$a):$ If $a_7=0$, we have $a_6=0$ or $a_4=0$. Assume $a_6=a_7=0$, then by relation $(\ref{hyper lemma Y_2 basis})$ we have $V(Y_2)=\langle V(\lambda_1)\rangle$. By Lemma \ref{lemma V(basis)}, we have $a_4=0$ since the $p$-rank of $X$ is zero. But then $X$ has Ekedahl-Oort type $[4,2]$ by a similar argument with $Y_6=Y_2^{\perp}$ and $Y_3=V(Y_6)$ as  in case $\mu=[3,2]$. Now suppose $a_7=a_4=0$. We have $a_6=0$ since $X$ has $p$-rank 0. Then again $X$ has  Ekedahl-Oort type $[4,2]$. 

$b):$ Now assume $a_7\ne 0$. Put $\alpha=(a_6/a_7)^{1/3}$ and we have $a_3=\alpha ^3a_4$ by equation $a_7a_3=a_6a_4$. Write $Y_1=V(Y_2)=\langle a_7^{1/9}V(\lambda_2)+a_4^{1/9}V(\lambda_1) \rangle $. Suppose we have $V^{m}(Y_1)=0$ and $V^{m-1}(Y_1)\neq 0$ for some $m\in \mathbb{Z}_{>0}$. For $0\leq i\leq m$,  put $V^i(Y_1)=\langle g_i(\lambda_0,\lambda_3)+c_i\lambda_2+d_i\lambda_1 \rangle $ with $g_i(x,y)\in k[x,y]$. Then we have 
\begin{align}
c_i=(\alpha c_{i-1}^{1/3}+d_{i-1}^{1/3})a_7^{1/3}, d_i=(\alpha c_{i-1}^{1/3}+d_{i-1}^{1/3})a_4^{1/3} \label{induction assumption c_i d_i}
\end{align}
for $1\leq i\leq m$. Now $V(V^{m-1}(Y_1))=0$. Therefore by Lemma \ref{lemma V(basis)} we have 
$$
V^{m}(Y_1)=V(\langle c_{m-1}\lambda_2+d_{m-1}\lambda_1+g_{m-1}(\lambda_0,\lambda_3) \rangle )=0\, .
$$
Hence we have $c_m=d_m=0$ as $V(\langle \lambda_0,\lambda_3 \rangle )=0$ by Lemma \ref{lemma V(basis)}. Thus we obtain $(\alpha c_{m-1}^{1/3}+d_{m-1}^{1/3})a_7^{1/3}=(\alpha c_{m-1}^{1/3}+d_{m-1}^{1/3})a_4^{1/3}=0$, which implies $\alpha c_{m-1}^{1/3}+d_{m-1}^{1/3}=0$ as $a_7\neq 0$. Using the inductive assumption $(\ref{induction assumption c_i d_i})$ again, we have 
$$
\alpha c_{m-1}^{1/3}+d_{m-1}^{1/3}=((\alpha ^{3}a_7^{1/3}+a_4^{1/3})(\alpha^{1/3}c_{m-2}+d_{m-2}))^{1/3}=0\, .
$$
Since $V^{m-1}(Y_1)\neq 0$, we have $\alpha^{1/3}c_{m-2}+d_{m-2}\neq 0$ and hence $(\alpha ^{3}a_7^{1/3}+a_4^{1/3})=0$. This implies $a_4=-\alpha^9 a_7$ and $a_3=\alpha^3a_4=-\alpha^ {12} a_7$. Now we  compute $Y_3=V(Y_6)$ and this equals
$$
\langle Y_2,(1-\alpha^{10/3}a_7^{1/3})\lambda_2+(\alpha a_7^{1/3}+\alpha^{1/3})\lambda_1+g(\lambda_0,\lambda_3)  \rangle 
$$
for some $g(x,y) \in k[x,y]$. Combined with 
\begin{align*}
Y_2&=\langle a_7^{1/3}(\lambda_2-\alpha^3\lambda_1)+\lambda_0,\lambda_3+\alpha a_7^{1/3}(\lambda_2-\alpha^3\lambda_1) \rangle \\
&=\langle \lambda_3-\alpha \lambda_0, a_7^{1/3}(\lambda_2-\alpha^3\lambda_1)+\lambda_0 \rangle\, ,
\end{align*}
we have $v(3)=1$ if 
$$
\alpha^3(-1+\alpha^{10/3}a_7^{1/3})=(\alpha a_7^{1/3}+\alpha^{1/3}) \, ,
$$ this is equivalent to $\alpha^3(\alpha^{16}-1)a_7=\alpha^9+\alpha$. Otherwise $X$ has Ekedahl-Oort type $[4,1]$ for general pair $(a_7,\alpha)\in \mathbb{A}_k^2$. Hence we have the desired conclusion for $\mu=[4,1]$. 
Moreover if in equation $(\ref{hyper equation [4,1]})$ we take $(a_7,\alpha)=(v^{10},v^9)$ with $v$ a primitive element in $\mathbb{F}_{27}$, then ${\rm disc}(f)=v^{21}\neq0$ and there is a curve with equation $(\ref{hyper equation [4,1]})$ has Ekedahl-Oort type associated to $\mu=[4,1]$. Hence by semicontinuity we have proved the theorem is true for $\mu=[4,1]$.

For $\mu=[4,2]$, from the discussion in the case $[4,1]$ above, a hyperelliptic curve 
  $X$ with Ekedahl-Oort type $[4,2]$ is either given by equation $(\ref{hyper equation a=2})$ with 
$ a_7=a_6=a_4=0$, or it can be written as 
$$
y^2=f(x)=x^9+a_7x^7+\alpha ^{3} a_7x^6-\alpha^9 a_7x^4-\alpha ^{12}a_7x^3+x
$$
with $a_7,\alpha \in k, a_7\neq 0$ satisfying $\alpha^3(\alpha^{16}-1)a_7=\alpha^9+\alpha$ and  ${\rm disc}(f)\neq 0$. Moreover,the curve with equation $y^2=x^9+x^7+x$ has ${\rm disc}(f)=1\neq 0$ and Ekedahl-Oort type $\mu=[4,2]$. Hence $Z_{[4,2]}\cap \mathcal{H}_4$ is non-empty of dimension $1$.

For $\mu=[4,3]$, a curve $X$ with Ekedahl-Oort type $[4,3]$ is given by $(\ref{hyper equation a=2})$ with $V(Y_2)=\langle 0\rangle$. Then by Lemma \ref{lemma V(basis)} we have $a_7=a_6=a_4=a_3=0$. This implies $X$ is isomorphic to the curve with equation
$
y^2=x^9+x
$. 
Now we have proved the theorem for $\mu=[2,1],[3,1],[3,2],[4,1],[4,2]$ and $[4,3]$. Also for $\mu=[2],[3]$ and $[4]$, by Glass and Pries \cite[Theorem 1]{Glass2005} the intersection $V_l\cap \mathcal{H}_4$ has codimension $4-l$ in $\mathcal{H}_4$ for $l=0,1,2$.  Since we have showed that $Z_{[2,1]}$ (resp. $Z_{[3,1]}$ and $Z_{[4,1]}$) intersects $ \mathcal{H}_4$ with codimension $3$ (resp. 4 and 5), it follows that $Z_{\mu}\cap \mathcal{H}_4$ has the expected codimension for $\mu=[2],[3]$ and $[4]$. 
\end{proof}
\section{Proof of Theorem $\ref{example E-O [4,2]}$}
We prove Theorem $\ref{example E-O [4,2]}$ using cyclic covers of the projective line in characteristic $p>0$. First we introduce some general facts on cyclic covers of the projective line and give a basis of the first de Rham cohomology for them. 

Let $k$ be an algebraically closed field of characteristic $p>0$. 
We fix  an integer $m\geq 2$ with $p\nmid m$. Write $a=(a_1,\dots, a_N)$ for an $N$-tuple of positive integers with $N\geq 3$. 
We say  $a$ is a monodromy vector of length $N$ if 
\begin{align}
\sum_{i=1}^Na_i\equiv 0~({\rm mod}~m ),\qquad \gcd(a_i,m)=1,\, i=1,\dots, N. \label{relation monodromy vector}
\end{align}
  There is an action of $ (\mathbb{Z}/m\mathbb{Z})^*\times \mathfrak{S}_N$ on the set of monodromy vectors of length $N$, where the symmetric group $\mathfrak{S}_N$ acts by permutation of indices and $(\mathbb{Z}/m\mathbb{Z})^*$ acts by multiplication on vectors. Two monodromy vectors $a$ and $a'$ are called equivalent if they are in the same orbit.

Let $P_1,\dots,P_N$ be the distinct points in $\mathbb{P}^1$ and $x$ be a coordinate on $\mathbb{P}^1$. By a change of coordinates, we may assume $P_1=0$ and $P_N=\infty$. Denote by $x-\xi_i$ with $\xi_i\in k$ the local parameter of $P_i$ ($\xi_1=0$) for $1\leq i \leq N-1$. We consider a smooth projective curve $X$ given by equation
\begin{align}
y^m=f_a(x)=\Pi_{i=1}^{N-1}(x-\xi_i)^{a_i}\, .\label{cyclic equation}
\end{align}
Note that the isomorphism class of the curve depends only  on the orbit of monodromy vector $a$.
For $N=3$, the supersingularity of cyclic covers of the projective line has been studied and examples of supersingular curves was given for $4\leq g\leq 11$, see \cite{2018arXiv180504598L}.
In \cite{ELKIN20111}, Elkin gave a bound for the $a$-number of $X$ for $m\geq 2$ and $N\geq 3$.	 

A curve defined by equation $(\ref{cyclic equation})$ is equipped with a $\mathbb{Z}/m\mathbb{Z}$ action generated  by $\epsilon:\, (x,y)\mapsto (x,\zeta^{-1} y)$ with $\zeta\in k$ a primitive $m$-th root of unity. This $\epsilon$ also induces an automorphism on $H^0(X,\Omega_X^1)$. Then we can decompose
 \begin{align}
 H^0(X,\Omega_X^1)=\bigoplus_{n=1}^{m-1}H^0(X,\Omega_X^1)_{(n)}\, ,\label{decomp H^0}
 \end{align}
 where $H^0(X,\Omega_X^1)_{(n)}:=\{\omega\in H^0(X,\Omega_X^1)~|~\epsilon^*(\omega)=\zeta^n \omega\}$ is the $\zeta^n$-eigenspace of $H^0(X,\Omega_X^1)$. Denote by $\langle z \rangle:=z-\lfloor z\rfloor$ the fractional part of $z$ for any $z\in \mathbb{R}$. Put 
\begin{align*}
 b(i,n)&:=\lfloor (na_i)/m \rfloor,\, 
 \omega_{n,l}:=y^{-n}x^l\Pi_{i=1}^{N-1}(x-\xi_i)^{b(i,n)}\diff x\, . 
\end{align*} 
Then (see for example \cite{MR2735989}) the space $H^0(X,\Omega_X^1)_{(n)}$ is generated by $\omega_{n,l}$  with $0\leq l \leq -2+\sum_{i=1}^N\langle na_i/m\rangle$ and 
 $$
 d_n=\dim H^0(X,\Omega_X^1)_{(n)}=-1+\sum_{i=1}^{N}\langle na_i/m\rangle\, .
 $$
By the Hurwitz formula, the curve has genus 
$
g=1+{((N-2)m-N)}/{2}\, .
$

Now we introduce a basis of de Rham cohomology of a curve $X$ given by equation $(\ref{cyclic equation})$. 
Fix a monodromy vector $a$ together with an ordered $N$-tuple $(\xi_1,\dots,\xi_N)$.
 Let $\pi : X\to \mathbb{P}^1$ be the $\mathbb{Z}/m\mathbb{Z}$-cover. Put $U_1=\pi ^{-1}(\mathbb{P}^1-\{0\})$ and $U_2=\pi ^{-1}(\mathbb{P}^1-\{\infty\})$. For the open affine cover $\mathcal{U}=\{U_1,U_2\}$, we consider the de Rham cohomology $H^1_{dR}(X)$ similar to the hyperelliptic case in Section \ref{section proof of theorem 1} above, i.e.
$$
H^1_{dR}(X)\cong Z_{dR}^1(\mathcal{U})/B_{dR}^1(\mathcal{U})
$$
with
$
Z_{dR}^1(\mathcal{U})=\{(t,\omega_1,\omega_2)|t\in \mathcal{O}_X(U_1\cap U_2),\omega_i\in \Omega_X^1(U_i),\diff t=\omega_1-\omega_2\}
$
and 
$B_{dR}^1(\mathcal{U})=\{(t_1-t_2,\diff t_1,\diff t_2)|t_i\in \mathcal{O}_X(U_i)\}$.

By $(\ref{decomp H^0})$, the vector space $H^0(X,\Omega_X^1)$ is generated by $\omega_{n,l}$ with $1\leq n\leq m-1$ and $0\leq l \leq -2+\sum_{i=1}^N\langle na_i/m\rangle$. Moreover for the basis of $H^1(X,\mathcal{O}_X)$, we have the following result.
\begin{lem}\label{lem basis H^1(X,OX)}
Let $K(X)$ be the function field of $X$. For all integers $n,l$ such that $1\leq n\leq m-1$ and $0\leq l \leq -2+\sum_{i=1}^N\langle na_i/m\rangle$, the elements $f_{n,l}:=y^nx^{-l-1}\Pi_{i=1}^{N-1}(x-\xi_i)^{-b(i,n)}\in K(X)$ are regular on $U_1\cap U_2$ and their residue classes $[f_{n,l}]$ form a basis of $H^1(X,\mathcal{O}_X)$ with respect to $\{U_1,U_2\}$.
\end{lem}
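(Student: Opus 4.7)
The plan is to establish the lemma in three logically separate steps: regularity of each $f_{n,l}$ on $U_1\cap U_2$, a combinatorial dimension count, and linear independence via Serre duality against the basis $\{\omega_{n',l'}\}$ of $H^0(X,\Omega_X^1)$ already described.

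First, the only potential poles of $f_{n,l}$ on $U_1\cap U_2$ lie at the ramification points $P_i=\pi^{-1}(\xi_i)$ for $2\leq i\leq N-1$. Since $\gcd(a_i,m)=1$, the cover is totally ramified over $\xi_i$, giving $\mathrm{ord}_{P_i}(x-\xi_i)=m$ and $\mathrm{ord}_{P_i}(y)=a_i$. A direct computation yields
$$
\mathrm{ord}_{P_i}(f_{n,l}) \;=\; na_i - m\,b(i,n) \;=\; m\langle na_i/m\rangle \;\geq\; 0,
$$
so $f_{n,l}\in\mathcal{O}_X(U_1\cap U_2)$. Next, the number of pairs $(n,l)$ equals $\sum_{n=1}^{m-1} d_n$ where $d_n=-1+\sum_{i=1}^N\langle na_i/m\rangle$. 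Since $\gcd(a_i,m)=1$, multiplication by $a_i$ permutes $\{1,\ldots,m-1\}$ modulo $m$, so $\sum_{n=1}^{m-1}\langle na_i/m\rangle=(m-1)/2$, yielding $\sum_n d_n=(N-2)(m-1)/2=g=\dim H^1(X,\mathcal{O}_X)$.

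To prove linear independence, use the \v{C}ech form of Serre duality
$$
\langle \omega,[f]\rangle \;=\; \sum_{P\in X\setminus U_1}\mathrm{Res}_P(f\,\omega),
$$
and observe $X\setminus U_1=\{P_0\}$ because $\gcd(a_1,m)=1$ forces total ramification above $0$. Under $\epsilon^*$ the form $f_{n,l}\omega_{n',l'}$ has eigenvalue $\zeta^{n'-n}$; since $P_0$ is fixed by $\epsilon$, the residue at $P_0$ is a $\mathbb{Z}/m$-invariant functional and therefore vanishes whenever $n\neq n'$ (both in $[1,m-1]$). For $n=n'$, the product telescopes to $f_{n,l}\omega_{n,l'}=x^{l'-l-1}\diff x$, a pullback from $\mathbb{P}^1$; the ramified residue formula $\mathrm{Res}_{P_0}(\pi^*\eta)=m\cdot\mathrm{Res}_0(\eta)$ then gives $\langle \omega_{n,l'},[f_{n,l}]\rangle = m\,\delta_{l,l'}$, nonzero since $p\nmid m$. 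Thus the pairing matrix is block diagonal with blocks $m\cdot I_{d_n}$, hence non-degenerate; combined with the dimension count, the classes $[f_{n,l}]$ form a basis.

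The main technical subtlety is the eigenvalue/vanishing argument for $n\neq n'$. It requires either a local analysis of how $\epsilon^*$ acts on the cotangent space at the fixed point $P_0$ (to confirm that the residue annihilates non-invariant eigenspaces), or an appeal to the intrinsic $\mathbb{Z}/m$-equivariance of Serre duality compatible with the decomposition $(\ref{decomp H^0})$ and its analogue on $H^1(X,\mathcal{O}_X)$. Everything else reduces to the already-known basis of $H^0(X,\Omega_X^1)$, standard ramified residue computations, and the combinatorial identity $\sum_{n=1}^{m-1}\langle na_i/m\rangle=(m-1)/2$.
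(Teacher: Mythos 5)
Your proof is correct, and on the decisive step it takes a genuinely different (and more complete) route than the paper. The paper's own proof consists of exactly your first step --- the order computation $\ord_{P_i}(f_{n,l})=na_i-mb(i,n)\geq 0$ at the points $P_i$, $2\leq i\leq N-1$ --- and then dismisses linear independence in one sentence by noting that the $f_{n,l}$ have pairwise distinct pole orders at the single (totally ramified) point over $\infty$, so no nontrivial combination can be a coboundary $t_1-t_2$; the dimension count is left implicit in the bijection with the basis $\{\omega_{n,l}\}$ of $H^0(X,\Omega_X^1)$. You instead make the count explicit via $\sum_{n=1}^{m-1}\langle na_i/m\rangle=(m-1)/2$ and prove independence by exhibiting the $[f_{n,l}]$ as (up to the scalar $m$) the Serre-dual basis of $\{\omega_{n,l}\}$: the eigenvalue argument at the fixed point over $0$ kills the off-diagonal blocks, and the trace compatibility $\mathrm{Res}_{P_0}(\pi^*\eta)=m\,\mathrm{Res}_0(\eta)$ gives the diagonal entries $m\neq 0$ in $k$. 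That last formula is genuinely needed, since $x^{k}\diff x$ with $k\equiv -1\pmod p$ but $k\neq -1$ is not residue-free by exactness alone; your instinct to invoke the ramified residue formula rather than a naive local expansion is the right one. The subtlety you flag at the end is in fact immediate: $\mathrm{Res}_{\epsilon(P)}(\eta)=\mathrm{Res}_P(\epsilon^*\eta)$ applied at the fixed point $P_0$ forces the residue to vanish on non-invariant eigenspaces, so no further local analysis is required. The two arguments are close cousins --- the paper's ``distinct pole orders'' at the totally ramified point is exactly what makes your pairing matrix diagonal --- but yours buys something extra: it simultaneously establishes the duality statement $\langle\alpha_{i_1,j_1},\beta_{i_2,j_2}\rangle\neq 0$ if and only if $(i_1,j_1)=(i_2,j_2)$, which the paper states separately in the remark following Proposition \ref{de Rham basis cyclic cover of P1} with a reference to Tait's thesis.
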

\begin{proof}
It suffices to show that the $f_{n,l}$ are regular on $U_1\cap U_2$ for all integers $n,l$ such that $\omega_{n,l}\in H^0(X,\Omega_X^1)$. One can check the linear independence by checking the order of $f_{n,l}$ at $\infty$. 

Note that for $P_i$ with $i=2,\dots,N-1$, we have 
\begin{align*}
\ord_{P_i}(\frac{y^n}{x^{l+1}\Pi_{i=1}^{N-1}(x-\xi_i)^{b(i,n)}})&=n\ord_{P_i}(y)-b(i,n)\ord_{P_i}(x-\xi_i)\\
&=na_i-mb(i,n)=na_i-m\lfloor \frac{na_i}{m}\rfloor \geq 0\, .
\end{align*}
Then $f_{n,l}$ is regular on $U_1\cap U_2$ for $1\leq n\leq m-1$ and $0\leq l \leq -2+\sum_{i=1}^N\langle na_i/m\rangle$.
\end{proof}
Put
$
s_a(x):=\Pi_{i=1}^{N-1}(x-\xi_i)^{b(i,n)}
$.
Denote by $h_a(x)$ the polynomial in $k[x]$ such that 
$$
\frac{nxs_a(x)f_a'(x)+((l+1)s_a(x)+xs_a'(x))f_a(x)}{s_a^2(x)}=\Pi_{i=1}^{N-1}(x-\xi_i)^{a_i-b(i,n)-1}h_a(x)\, ,
$$
where $f'_a(x)$ (resp. $s'_a(x)$) is the formal derivative of $f_a(x)$ (resp. $s_a(x)$).
In the following we write simply $s(x)$ (resp. $f(x)$, $h(x)$) for $s_a(x)$ (resp. $f_a(x)$, $h_a(x)$).

Then we have the following result.
\begin{prop}\label{de Rham basis cyclic cover of P1}
Let $X$ be a  smooth projective curve over $k$ given by equation $(\ref{cyclic equation})$.
Then $H^1_{dR}(X)$ has a basis with respect to $\mathcal{U}=\{U_1,U_2\}$ consisting of the following residue classes with representatives in $Z_{dR}^1(\mathcal{U})$:
\begin{align}
\alpha_{n,l}&=[(0,\omega_{n,l},\omega_{n,l})], \,1\leq n\leq m-1,\, 0\leq l \leq -2+\sum_{i=1}^N\langle na_i/m\rangle\, ,\\
\beta_{n,l}&=[(f_{n,l},\frac{\psi_{n,l} (x)t(x)}{x^{l+2}y^{m-n}}{\rm d}x,-\frac{\phi_{n,l} (x)t(x)}{x^{l+2}y^{m-n}}{\rm d}x)]\, ,\label{de rham basis AS H^1}
\end{align}
where $t(x)=\Pi_{i=1}^{N-1}(x-\xi_i)^{a_i-b(i,n)-1}$ and $\psi(x)+\phi(x)=h(x)$ with $\psi(x)$  the sum of monomials of degree $\leq l+1$.
\end{prop}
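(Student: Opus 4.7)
The plan is to exploit the Čech-theoretic short exact sequence
\begin{align*}
0 \to H^0(X, \Omega^1_X) \to H^1_{dR}(X) \to H^1(X, \mathcal{O}_X) \to 0,
\end{align*}
given by $[\omega] \mapsto [(0, \omega, \omega)]$ and $[(t, \omega_1, \omega_2)] \mapsto [t]$. Since $\dim H^1_{dR}(X) = 2g$ splits as $g + g$, it suffices to produce $2g$ classes whose images under the two maps give the known bases of the outer terms. By construction, the $\alpha_{n,l}$ are the images of the basis $\{\omega_{n,l}\}$ of $H^0(X, \Omega^1_X)$ from the decomposition (\ref{decomp H^0}). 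The real content lies in the $\beta_{n,l}$.

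The first step is to verify that $\beta_{n,l}$ is a well-defined element of $Z^1_{dR}(\mathcal{U})$: one needs $\diff f_{n,l} = \omega^{(1)}_{n,l} - \omega^{(2)}_{n,l}$, where $\omega^{(1)}_{n,l} = \psi_{n,l}(x) t(x)/(x^{l+2} y^{m-n}) \diff x$ and $\omega^{(2)}_{n,l} = -\phi_{n,l}(x) t(x)/(x^{l+2} y^{m-n}) \diff x$. Starting from $f_{n,l} = y^n/(x^{l+1} s(x))$ and using $m y^{m-1} \diff y = f'(x)\diff x$ (recall $p \nmid m$, so $m$ is invertible), a direct computation shows that $\diff f_{n,l}$ is a scalar multiple of
$$\frac{n x s(x) f'(x) + ((l+1) s(x) + x s'(x)) f(x)}{x^{l+2} s(x)^2 y^{m-n}} \diff x,$$
which by the defining identity of $h_a$ equals (up to an invertible constant) $h(x) t(x)/(x^{l+2} y^{m-n}) \diff x$. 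Splitting $h(x) = \psi_{n,l}(x) + \phi_{n,l}(x)$ into monomials of degree $\leq l+1$ and of degree $> l+1$ produces the claimed cocycle presentation.

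The second step is the regularity claims. I would compute local orders at each distinguished point of $X$. At a branch point $P_i$ with $\xi_i$ finite and nonzero, the cyclic cover has $\ord_{P_i}(x-\xi_i) = m$, $\ord_{P_i}(y) = a_i$, and $\ord_{P_i}(\diff x) = m-1$; substitution yields
$$\ord_{P_i}(\omega^{(1)}_{n,l}) = m \cdot \ord_{x=\xi_i}(\psi_{n,l}) + m \langle na_i/m \rangle - 1 \geq 0,$$
since $\gcd(a_i, m) = 1$ forces $m \langle na_i/m \rangle \geq 1$. This is the same inequality that underlies the holomorphicity of $\omega_{n,l}$ on $X$. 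At the fiber $\pi^{-1}(\infty) \subset U_1$, regularity is controlled by the degree bound $\deg \psi_{n,l} \leq l+1$: passing to a local parameter at $\infty$ and invoking the monodromy relation $\sum a_i \equiv 0 \pmod m$, the contributions from $t(x)$, the factor $x^{-l-2}$, and the factor $y^{-(m-n)}$ combine to leave a nonnegative order. Poles over $0$ are permitted in $U_1$, so nothing needs to be checked there. The verification of $\omega^{(2)}_{n,l} \in \Omega^1_X(U_2)$ is symmetric: now $\phi_{n,l}$ is supported in high degrees, ensuring regularity over $\infty$ while permitting poles over the excluded fiber $\pi^{-1}(\infty)$---wait, in fact poles over $\infty$ lie in $U_2$'s excluded fiber, so the symmetric role has $\phi_{n,l}$ providing regularity at the $\xi_i \neq 0$ and over $0$, with poles allowed over $\infty$.

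Once $\beta_{n,l} \in Z^1_{dR}(\mathcal{U})$ is established, its image under the projection $H^1_{dR}(X) \to H^1(X, \mathcal{O}_X)$ is $[f_{n,l}]$, which by Lemma \ref{lem basis H^1(X,OX)} forms a basis of $H^1(X, \mathcal{O}_X)$. Together with the $\alpha_{n,l}$ on the sub side, this yields the asserted basis of $H^1_{dR}(X)$. I expect the main obstacle to be the pole-order bookkeeping at $\pi^{-1}(\infty)$, where the degree bound $\deg \psi_{n,l} \leq l+1$ must be shown to produce exactly the right cancellation against the powers of $t(x)$, $x$, and $y$; this calculation parallels and refines the one that establishes the dimension formula $d_n = -1 + \sum_i \langle na_i/m \rangle$ for the $n$-th eigenspace of $H^0(X, \Omega^1_X)$.
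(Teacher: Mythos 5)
Your proposal is correct and follows essentially the same route as the paper: the short exact sequence $0\to H^0(X,\Omega_X^1)\to H^1_{dR}(X)\to H^1(X,\mathcal{O}_X)\to 0$, the computation of $\diff f_{n,l}$ to exhibit the cocycle, the local order estimates at the branch points and at $0$ and $\infty$ (using the degree bounds on $\psi_{n,l}$ and $\phi_{n,l}$), and Lemma \ref{lem basis H^1(X,OX)} to conclude. Your brief self-correction about which fiber is excluded from $U_2$ lands on the right assignment, matching the paper's treatment of $P_1=0$ via $\phi_{n,l}$ and $P_N=\infty$ via $\psi_{n,l}$.
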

\begin{proof}
We use the exact sequence 
\begin{align*}
0\to H^0(X,\Omega_X^1)\to H^1_{dR}(X)\to H^1(X,\mathcal{O}_X)\to 0\, .
\end{align*}
The elements $\alpha_{n,l}$ are images of $\omega_{n,l}$ under the canonical map.

By Lemma \ref{lem basis H^1(X,OX)}, we have $[f_{n,l}]\in H^1(X,\mathcal{O}_X)$ for any $n,l$ such that $\omega_{n,l}\in H^0(X,\Omega_X^1)$. To prove the theorem, we need to show that the elements $\beta_{n,l}$ are well defined and are mapped to the element $[f_{n,l}]$ in $H^1(X,\mathcal{O}_X)$. We first show that  ${\psi_{n,l} (x)}/{(x^{l+2}y^{m-n})}\in \mathcal{O}(U_1)$ and ${\phi_{n,l} (x)}/{(x^{l+2}y^{m-n})}\in \mathcal{O}(U_2)$. Next we show 
$$
\diff f_{n,l}=\frac{\psi_{n,l} (x)}{x^{l+2}y^{m-n}}-(-\frac{\phi_{n,l} (x)}{x^{l+2}y^{m-n}})
$$
and then we will have the desired conclusion.

Note that for any $P_i$ with $i=1,\dots,N-1$, we have 
\begin{align*}
\ord_{P_i}(\frac{t(x)}{y^{m-n}}\diff x)&=\ord_{P_i}(\frac{\Pi_{i=1}^{N-1}(x-\xi_i)^{a_i-b(i,n)-1}}{y^{m-n}})+\ord_{P_i} (\diff x)\\
&=m(a_i-\lfloor \frac{na_i}{m}\rfloor-1)-a_i(m-n)+m-1=m\langle \frac{na_i}{m}\rangle-1\geq 0\, ,
\end{align*}
since $\gcd (a_i,m)=1$. Hence $$\ord_{P_i}(\frac{\phi_{n,l} (x)t(x)}{x^{l+2}y^{m-n}}{\rm d}x))\geq \ord_{P_i}(\frac{t(x)}{y^{m-n}}\diff x)\geq 0$$ and ${\phi_{n,l} (x)t(x)}/{(x^{l+2}y^{m-n})}{\rm d}x$ is regular at $P_i$ for $i=2,\dots,N-1$. By a similar argument, ${\psi_{n,l} (x)t(x)}/{(x^{l+2}y^{m-n})}{\rm d}x$ is also regular at $P_i$ for $i=2,\dots,N-1$. For $P_1=0$, we have 
\begin{align*}
\ord_{P_1}(\frac{\phi_{n,l}(x)t(x)}{x^{l+2}y^{m-n}}\diff x)\geq \ord_{P_1}(\frac{t(x)}{y^{m-n}}\diff x)\geq 0\, ,
\end{align*}
since all the monomials of $\phi_{n,l}(x)$ has degree $\geq l+2$. Then the residue class of the element ${\phi_{n,l} (x)t(x)}/{(x^{l+2}y^{m-n})}{\rm d}x$ is regular on $U_2$.  For $P_N=\infty$, by a similar calculation we have
\begin{align*}
\ord_{P_N}(\frac{\psi_{n,l}(x)t(x)}{x^{l+2}y^{m-n}}\diff x)\geq \ord_{P_N}(\frac{x^{l+1}t(x)}{x^{l+2}y^{m-n}}\diff x)\geq 0\, .
\end{align*}
Let $n,l$ be  integers  such that $\omega_{n,l}\in H^0(X,\Omega_X^1)$. Then
\begin{align*}
\diff (f_{n,l})=\diff (\frac{y^n}{x^{l+1}s(x)})&=\frac{nxs(x)f'(x)+((l+1)s(x)+xs'(x))f(x)}{x^{l+2}s^2(x)y^{m-n}}\diff x\\
&=\frac{t(x)h(x)}{x^{l+2}y^{m-n}}\diff x=\frac{t(x)\psi_{n,l} (x)}{x^{l+2}y^{m-n}}\diff x-\frac{-t(x)\phi_{n,l}(x)}{x^{l+2}y^{m-n}}\diff x\, .
\end{align*}
\end{proof}

\begin{rmk}
The pairing $\langle~, ~\rangle$ for this basis is as follows: $\langle\alpha_{i_1,j_1},\beta_{i_2,j_2} \rangle \neq 0$ if $(i_1,j_1)=(i_2,j_2)$ and $\langle\alpha_{i_1,j_1},\beta_{i_2,j_2} \rangle = 0$ otherwise. Indeed, for $(i_1,j_1)=(i_2,j_2)$ we have $\ord_{\infty}(1/x\diff x)=-1$ and hence $\langle\alpha_{i_1,j_1},\beta_{i_2,j_2} \rangle \neq  0$. For the other cases,  the proof is similar to the proof of \cite[Theorem 4.2.1]{soton373877}.
\end{rmk}

Now for $p=2$ and $N=4$, we have the following
\begin{cor}\label{cor a=(1,1,1,m-3)}
Let $k$ be an algebraically closed field of characteristic $p$ and $a$ be a monodromy vector satisfying relation $(\ref{relation monodromy vector})$ with $a=(1,1,1,m-3)$. Let $X$ be a curve of genus $m-1$ over $k$ given by equation
$$
y^m=x(x-1)(x-\xi)\, ,
$$
where $\xi\neq 0,1\in k$.
Then $H^1_{dR}(X)$ has a basis with respect to $\mathcal{U}=\{U_1,U_2\}$ consisting of the following residue classes with representatives in $Z_{dR}^1(\mathcal{U})$:
\begin{align*}
\alpha_{i,0}&=[(0,\frac{1}{y^i}\diff x,\frac{1}{y^i}\diff x)], \,\frac{m}{3}< i\leq m-1\, ,
\alpha_{j,1}=[(0,\frac{x}{y^j}\diff x,\frac{x}{y^j}\diff x)], \,\frac{2m}{3}< j\leq m-1\, ,\\
\beta_{i,0}&=[(\frac{y^i}{x},\frac{\xi}{xy^{m-i}}{\rm d}x,-\frac{x+(\xi+1)}{y^{m-i}}{\rm d}x)], \,\text{ $i$ even and }\frac{m}{3}< i\leq m-1,\\
\beta_{i,0}&=[(\frac{y^i}{x},0,-\frac{(\xi+1)}{y^{m-i}}{\rm d}x)], \,\text{ $i$ odd and }\frac{m}{3}< i\leq m-1,\\
\beta_{j,1}&=[(\frac{y^j}{x^2},0,0)], \,\text{ $j$ even and }\frac{2m}{3}< j\leq m-1,\\
\beta_{j,1}&=[(\frac{y^j}{x^2},\frac{\xi}{x^2y^{m-j}}{\rm d}x,-\frac{1}{y^{m-j}}{\rm d}x)], \,\text{ $j$ odd and }\frac{2m}{3}< j\leq m-1,
\end{align*}

\end{cor}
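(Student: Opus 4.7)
The plan is to specialize Proposition \ref{de Rham basis cyclic cover of P1} to the case $p=2$, $N=4$, $a=(1,1,1,m-3)$ with branch locus $(\xi_1,\xi_2,\xi_3,\xi_4)=(0,1,\xi,\infty)$, and then just read off the four families of $\beta$-classes. The genus formula gives $g=1+((N-2)m-N)/2=m-1$, consistent with the statement. Note that the monodromy condition $\gcd(m-3,m)=1$ forces $\gcd(3,m)=1$, so $m/3$ and $2m/3$ are never integers, which justifies the strict inequalities in the stated ranges.

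First I would compute the building blocks. For $i=1,2,3$ with $a_i=1$ and $1\leq n\leq m-1$ one has $b(i,n)=\lfloor n/m\rfloor=0$, hence $s(x)=\prod_{i=1}^{3}(x-\xi_i)^{b(i,n)}=1$ and $t(x)=\prod_{i=1}^{3}(x-\xi_i)^{a_i-b(i,n)-1}=1$. This immediately gives $\omega_{n,l}=x^{l}y^{-n}\diff x$ and $f_{n,l}=y^{n}/x^{l+1}$, so the $\alpha_{n,l}$ in Proposition \ref{de Rham basis cyclic cover of P1} are exactly the $\alpha_{i,0}$ and $\alpha_{j,1}$ in the corollary. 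Second, I would determine the admissible $l$ by computing
\[
-2+\sum_{i=1}^{4}\Bigl\langle\tfrac{na_i}{m}\Bigr\rangle=-2+\tfrac{3n}{m}+\Bigl\langle-\tfrac{3n}{m}\Bigr\rangle,
\]
which equals $-1,0,1$ according as $3n<m$, $m<3n<2m$, or $3n>2m$. This produces exactly the index ranges $m/3<i\leq m-1$ for $l=0$ and $2m/3<j\leq m-1$ for $l=1$.

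Next I would compute $h(x)$ using the defining identity of Proposition \ref{de Rham basis cyclic cover of P1}. Since $s(x)=1$ and $s'(x)=0$, it collapses to
\[
h(x)=nxf'(x)+(l+1)f(x).
\]
In characteristic $2$ we have $f(x)=x^{3}+(1+\xi)x^{2}+\xi x$ and $f'(x)=x^{2}+\xi$, so $xf'(x)=x^{3}+\xi x$. I would then split into four cases by the parity of $n$ (which determines $n\bmod 2$) and the value of $l\in\{0,1\}$, and in each case extract $\psi_{n,l}$ as the sum of monomials of degree $\leq l+1$ and $\phi_{n,l}$ as the remainder.

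Concretely: for $l=0$ and $n$ even, $h=f$, so $\psi=\xi x$ and $\phi=x^{3}+(1+\xi)x^{2}$; after dividing by $x^{l+2}=x^{2}$ and $y^{m-n}$ this gives the first $\beta_{i,0}$ formula. For $l=0$ and $n$ odd, $h=xf'+f=(1+\xi)x^{2}$, so $\psi=0$ and $\phi=(1+\xi)x^{2}$, giving the second $\beta_{i,0}$ formula. For $l=1$ and $n$ even, $h=0$, giving $\beta_{j,1}=[(y^{j}/x^{2},0,0)]$. For $l=1$ and $n$ odd, $h=xf'=x^{3}+\xi x$, so $\psi=\xi x$ and $\phi=x^{3}$, giving the last $\beta_{j,1}$ formula. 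There is no conceptual obstacle: once the reductions $s=t=1$ and the characteristic-$2$ simplification of $f,f'$ are in place, the four cases are pure bookkeeping. The only point requiring a moment of care is distinguishing the degree-$2$ monomial in $\phi$ when $l=0$ (which survives) from when $l=1$ (where it would belong to $\psi$), but the parity/case split above handles this cleanly.
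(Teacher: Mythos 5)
Your proposal is correct and follows essentially the same route as the paper: both specialize Proposition \ref{de Rham basis cyclic cover of P1} to $a=(1,1,1,m-3)$, note that $b(i,n)=0$ so $s(x)=t(x)=1$, and determine the admissible ranges of $l$ from $-2+\sum_{i=1}^4\langle na_i/m\rangle$. The paper leaves the explicit case-by-case computation of $h$, $\psi_{n,l}$, $\phi_{n,l}$ in characteristic $2$ implicit ("the rest follows from the Proposition"), whereas you carry it out; your calculations match the stated formulas.
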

\begin{proof}
Note that $a=(a_1,a_2,a_3,a_4)=(1,1,1,m-3)$. Then by definition $b(i,n)=\langle na_i/m\rangle=0$ for any $1\leq n\leq  m-1$ and $1\leq i\leq 3$. Moreover,  the differential form $\omega_{n,l}=y^{-n}x^l\diff x$ is holomorphic for $1\leq n \leq m-1$ if and only if $0\leq l\leq -2+\sum_{i=1}^4\langle na_i/m \rangle\leq -2+3=1$. If $0\leq n \leq m/3$, then $\sum_{i=1}^4\langle na_i/m \rangle=1$ and $H^0(X,\Omega_X^1)_{(n)}=\langle 0 \rangle$. The rest of the corollary follows from Proposition \ref{de Rham basis cyclic cover of P1}.
\end{proof}
We can now give the proof of Theorem \ref{example E-O [4,2]}. 

\begin{proof}[Proof of Theorem $\ref{example E-O [4,2]}$]
Let $X$ be a curve of genus $4$ with equation 
\begin{align*}
y^5=x(x-1)(x-\xi)\, ,
\end{align*}
where $\xi\in k\backslash\mathbb{F}_{p}$ and $p= 2\,(\,{\rm mod}\, 5)$.
We first show that  $X$  has Ekedahl-Oort type $[4,2]$. Then  the case $p= -2\, (\,{\rm mod}\, 5)$ is similar and hence we omit it. Write $p=5r+2$ with $r\in \mathbb{Z}_{\geq 0}$. Since $p$ is a prime, either $r=0$ or $r$ is an odd positive integer.

Let $Y_8:=H^1_{dR}(X)$ and $Y_4:=H^0(X,\Omega_X^1)$. By Corollary \ref{cor a=(1,1,1,m-3)}, we have $V(Y_8)=Y_4$ and $Y_4=\langle \alpha_{2,0},\alpha_{3,0},\alpha_{4,0},\alpha_{4,1}  \rangle$.

Note that 
\begin{align*}
\mathcal{C}(\frac{1}{y^2}\diff x)&=\mathcal{C}(\frac{y^{5r}}{y^{5r+2}}\diff x)=\frac{1}{y}\mathcal{C}((x(x-1)(x-\xi))^{r}\diff x)=0\, .
\end{align*}
Then similarly
$\mathcal{C}({1}/{y^3}\diff x)=\mathcal{C}((x(x-1)(x-\xi))^{4r+1}\diff x)/{y^4}\, ,
\mathcal{C}({1}/{y^4}\diff x)= \mathcal{C}((x(x-1)(x-\xi))^{2r}\diff x)/{y^2}$ and $
\mathcal{C}({x}/{y^4}\diff x)=\mathcal{C}(x^{2r+1}((x-1)(x-\xi))^{2r}\diff x)/{y^2}$.
One can show that the coefficient of $x^{p-1}=x^{5r+1}$ in $x^{2r+i}((x-1)(x-\xi))^{2r}$ cannot be simultaneously zero  for $i=0,1$ and $\xi \in k\backslash \mathbb{F}_{p}$. Similarly, the coefficients of $x^{p-1}$ and $x^{2p-1}$ in $(x(x-1)(x-\xi))^{4r+1}$ are both not zero.
Then  $Y_2:=V(Y_4)=\langle \alpha_{2,0} ,\, \gamma\alpha_{4,0}+\eta \alpha_{4,1} \rangle$ with $\gamma,\eta\in k^*$. Denote by $Y_6=Y_2^{\perp}$ the orthogonal complement with respect to the pairing  on $H^1_{dR}(X)$. Hence by a calculation using Corollary \ref{cor a=(1,1,1,m-3)}, we have 
$$
Y_6=\langle \alpha_{2,0},\alpha_{3,0},\alpha_{4,0},\alpha_{4,1},\beta_{3,0},\lambda_0\beta_{4,0}+\lambda_1\beta_{4,1}\rangle\, ,
$$
where $\lambda_i\in k^*$. This implies $Y_3:=V(Y_6)=\langle \alpha_{2,0},\alpha_{4,0},\alpha_{4,1}\rangle$ and $V(Y_3)=\langle  \alpha_{2,0}\rangle$. We obtain that $X$ has Ekedahl-Oort type $[4,2]$.

Now we show that $Z_{[4,3]}$ is non-empty in $\mathcal{M}_4$ for any odd prime $p$ with $p\equiv \pm 2(\, {\rm mod} \, 5)$.

Take now  $m=5$ and monodromy vector $a=(1,1,1,2)$ in equation $(\ref{cyclic equation})$ and consider a curve $X$ given by equation 
\begin{align}
y^5=x(x-\xi)(x+\xi)\, ,\label{curve [4,3]}
\end{align}
where $\xi\in k^*$. For $p\neq 2,5$, the curve is of genus $4$.  Moreover by Lemma \ref{lem basis H^1(X,OX)}, the vector space $H^0(X,\Omega_X^1)$ has a basis given by forms $y^2\diff x,y^3\diff x,y^4\diff x$ and $xy^4\diff x$. Now if $p> 2$ and $p\equiv  2(\, {\rm mod} \, 5)$, then write $p=5r+2$ with $r$ an odd positive integer and we consider the action of the Cartier operator $\mathcal{C}$. By a similar calculation as in the case $Z_{[4,2]}$, we have
\begin{align*}
\mathcal{C}(\frac{\diff x}{y^2})=\mathcal{C}(\frac{x\diff x}{y^4})=0,\,
\mathcal{C}(\frac{\diff x}{y^3})=\eta_1\frac{x\diff x}{y^4}, \,
\mathcal{C}(\frac{\diff x}{y^4})=\eta_2\frac{\diff x}{y^2}
\end{align*}
with some $\eta_1,\eta_2\in k^*$.
Then  $X$ has  Ekedahl-Oort type $[4,3]$ and $v(2)=0$ with $v$ the final type, cf. \cite{Oort1999,vanderGeer1999}. This implies $X$ is supersingular, see \cite[page 1379]{10.2307/23030376}. By a similar argument, one can show that for $p\equiv  3(\, {\rm mod} \, 5)$, the curve has Ekedahl-Oort type $[4,3]$ and hence is supersingular.

Now we show the existence of superspecial curves of genus $4$ in characteristic $p\equiv - 1(\, {\rm mod} \, 5)$. Again let $X$ be the same curve given by equation $(\ref{curve [4,3]})$ and write $p=5r+4$ with $r$ some positive integer. Then
\begin{align*}
\mathcal{C}(\frac{\diff x}{y^2})=\mathcal{C}(\frac{y^{15r+10}}{y^{15r+10+2}}\diff x)=\frac{1}{y^3}\mathcal{C}(x^{3r+2}(x^2-v^2)^{3r+2}\diff x)=0.
\end{align*}
 By a similar calculation, we have
$
\mathcal{C}({\diff x}/{y^3})=
\mathcal{C}({x\diff x}/{y^4})=
\mathcal{C}({\diff x}/{y^4})=0
$.
Then $\mathcal{C}(H^0(X,\Omega_X^1))=0$ and $X$ is superspecial.
\end{proof}

\section{An alternative proof of Kudo's result}

In \cite{2018arXiv180409063K}, Kudo showed that there is a superspecial non-hyperelliptic curve of genus $4$ over $k$ for any  odd prime $p\equiv 2\,({\, \rm  mod}\,  3)$ by viewing such curves as an intersection of a quadric and a cubic in $\mathbb{P}^3$.
Using our approach,  we can give an alternative proof of Kudo's result. 
\begin{prop}\cite[Theorem $3.1$]{2018arXiv180409063K}
There exists a superspecial curve of genus $4$ in characteristic $p\equiv 2\,({\, \rm  mod}\,  3)$.
\end{prop}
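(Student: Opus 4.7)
The plan is to exhibit an explicit superspecial non-hyperelliptic curve of genus $4$ as a cyclic triple cover of the projective line, paralleling the proof of Theorem \ref{example E-O [4,2]}. My candidate is
\begin{equation*}
X:\quad y^{3}=x^{6}-1,
\end{equation*}
the cover associated to the monodromy vector $a=(1,1,1,1,1,1)$ of length $N=6$ with $m=3$. By the genus formula following $(\ref{relation monodromy vector})$ the curve has genus $1+((N-2)m-N)/2=4$, and for any prime $p\geq 5$ with $p\equiv 2\pmod 3$ (equivalently $p\equiv 5\pmod 6$) it is smooth. It is non-hyperelliptic since the projection $(x,y)\mapsto x$ is a base-point-free $g^{1}_{3}$, and a hyperelliptic curve of genus $4$ admits no base-point-free $g^{1}_{3}$ (every $g^{1}_{3}$ on such a curve is composed with the hyperelliptic pencil and hence has a base point).

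The task is then to show that the Cartier operator $\mathcal{C}$ annihilates $H^{0}(X,\Omega_{X}^{1})$. By Lemma \ref{lem basis H^1(X,OX)} (restricted to the $\zeta^{n}$-eigenspaces) a basis is
\begin{equation*}
\omega_{1,0}=\frac{dx}{y},\qquad \omega_{2,0}=\frac{dx}{y^{2}},\qquad \omega_{2,1}=\frac{x\,dx}{y^{2}},\qquad \omega_{2,2}=\frac{x^{2}\,dx}{y^{2}}.
\end{equation*}
Because $\mathcal{C}$ is $p^{-1}$-linear and commutes with the $\mathbb{Z}/3$-action generated by $\epsilon$, it sends the $\zeta^{n}$-eigenspace into the $\zeta^{n'}$-eigenspace with $n'\equiv np^{-1}\pmod 3$. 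For odd $p\equiv 2\pmod 3$ one has $n'=2$ when $n=1$ and $n'=1$ when $n=2$.

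The workhorse identity, obtained by writing $y^{-n}=f(x)^{L_{n}}/(y^{n'})^{p}$ with $L_{n}=(n'p-n)/3$ and applying $\mathcal{C}(g^{p}\eta)=g\,\mathcal{C}(\eta)$, is
\begin{equation*}
\mathcal{C}\!\left(\frac{h(x)}{y^{n}}\,dx\right)\;=\;\frac{1}{y^{n'}}\cdot\mathcal{C}_{x}\!\left(h(x)\,f(x)^{L_{n}}\,dx\right),
\end{equation*}
where $\mathcal{C}_{x}$ is the usual Cartier on $k(x)\,dx$, determined by the coefficients of $x^{jp+p-1}$. For $f(x)=x^{6}-1$ every monomial in $f^{L_{n}}$ has degree divisible by $6$, so the vanishing of $\mathcal{C}$ on each basis element reduces to the assertion that, for each relevant $(n,l,j)$ in the finite range dictated by $\deg f^{L_{1}}=4p-2$ and $\deg f^{L_{2}}=2p-4$, the integer $jp+p-1-l$ is not divisible by $6$. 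The values that actually occur are $p-1,\,p-2,\,p-3,\,2p-1,\,3p-1$, and modulo $6$ (using $p\equiv 5\pmod 6$) these reduce to $4,3,2,3,2$ respectively, none of which is $0$. Hence $\mathcal{C}$ kills every basis form and $X$ is superspecial.

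The only mildly subtle step is the derivation of the Cartier formula on the cyclic cover, but it is completely analogous to Lemma \ref{lemma V(basis)} and requires only the relation $y^{m}=f(x)$ together with the behaviour of $\mathcal{C}$ on $p$-th powers. Once that is in place, the rest is a uniform modular-arithmetic check that is substantially shorter than Kudo's computation with quadric-cubic intersections in $\mathbb{P}^{3}$.
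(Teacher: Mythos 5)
Your proof is correct and follows essentially the same route as the paper's: an explicit $\mathbb{Z}/3$-cover of $\mathbb{P}^1$ of genus $4$ (the paper uses $y^3=x^5+x$, you use $y^3=x^6-1$) on which the Cartier operator is shown to kill $H^0(X,\Omega_X^1)$ by absorbing a $p$-th power of $y^{n'}$ into each basis differential and checking that the resulting polynomial has no monomial of degree $\equiv -1 \pmod p$. One small caveat: since $\deg(x^6-1)=6\equiv 0\pmod 3$ your cover is unramified over $\infty$, so it is not literally an instance of equation $(\ref{cyclic equation})$ with an admissible monodromy vector (the entry at $\infty$ would violate $\gcd(a_N,m)=1$), and the appeals to the genus formula and to Lemma \ref{lem basis H^1(X,OX)} should be replaced by the direct Riemann--Hurwitz and order-of-vanishing computations, which you in effect already carry out and which confirm $g=4$ and the basis $dx/y,\,dx/y^2,\,x\,dx/y^2,\,x^2\,dx/y^2$.
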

\begin{proof}

Consider the monodromy vector $a=(1,1,1,1,1,1)$. Let $X$ be the smooth projective curve of genus $4$ with equation 
$$
y^3=x(x-\xi)(x-\xi^3)(x-\xi^5)(x-\xi^7)=x^5+x\, ,
$$
where $\xi\in k$ is a primitive $8$-th root of unity.

By Lemma \ref{lem basis H^1(X,OX)}, the vector space $H^0(X,\Omega_X^1)$ has a basis consisting of forms $1/y\diff x$, $1/y^2\diff x,x/y^2\diff x$ and $x^2/y^2\diff x$. Write $p=3r+2$ with $r$ an odd positive integer. Then 
\begin{align*}
\mathcal{C}(\frac{1}{y}\diff x)=\mathcal{C}(\frac{y^{6r+3}}{y^{6r+3+1}}\diff x)=\frac{1}{y^2}\mathcal{C}(x^{2r+1}(x^4+1)^{2r+1}\diff x)\, .
\end{align*}
Since $r$ is an odd integer,  the coefficient of $x^{np-1}$ in $x^{2r+1}(x^4+1)^{2r+1}$ for any  $n\in \mathbb{Z}_{>0}$ is zero. Similarly, we have $\mathcal{C}(x^i\diff x/y^2)=0$ for $i=0,1,2$.
\end{proof}

\bibliographystyle{plain}

\end{document}